\title{Multigrid methods for $\Hdiv$-conforming discontinuous
  Galerkin methods for the Stokes equations}
\author{G.~Kanschat\thanks{\texttt{kanschat@uni-heidelberg.de}, Interdisziplinäres Zentrum für Wissenschaftliches Rechnen
  (IWR), Universität Heidelberg, Im Neuenheimer Feld 368, 69120
  Heidelberg, Germany}
\and
Y.~Mao\thanks{\texttt{youlimao@tamu.edu}, Department of Mathematics, Texas A\&M
  University, 3368 TAMU, College Station, TX 77843, USA}}
\let\epsilon\varepsilon
\def\naught#1{#1^0}
\def\A{\mathcal A}
\def\B{\mathcal B}
\def\L{\mathcal L}
\def\I{\mathcal I}
\def\F{\mathbb F}
\def\T{\mathbb T}
\def\R{\mathbb R}
\def\d{\partial}
\def\n{\mathbf n}
\def\grad{\nabla}
\def\div{\nabla\!\cdot\!}
\def\ntrace#1{#1\!\cdot\!\n}
\newcommand{\mvl}[1]{\{\!\!\{#1\}\!\!\}}             
\def\rf#1{\widehat{#1}}
\def\Hdiv{H^{\text{div}}}                            
\newcommand{\norm}[1]{\bigl\|#1\bigr\|}
\def\form(#1,#2){\left(#1,#2\right)}
\def\forme(#1,#2){\bigl<#1,#2\bigr>}
\newtheorem{theorem}{Theorem}
\newtheorem{lemma}{Lemma}[theorem]
\newtheorem{proposition}{Proposition}[theorem]
\begin{document}

\maketitle

\abstract{%
  A multigrid method for the Stokes system discretized with an
  $\Hdiv$-conforming discontinuous Galerkin method is
  presented. It acts on the combined velocity and pressure spaces and
  thus does not need a Schur complement approximation. The smoothers
  used are of overlapping Schwarz type and employ a local Helmholtz
  decomposition. Additionally, we use the fact that the discretization
  provides nested divergence free subspaces. We present the
  convergence analysis and numerical evidence that convergence rates
  are not only independent of mesh size, but also reasonably small.
}

\section{Introduction}

The efficient solution of the Stokes equations is an important step
in the development of fast flow solvers. In this paper we present analysis and
numerical results for a multigrid method with subspace correction
smoother, which performs very efficiently on divergence-conforming
discretizations with interior penalty. We obtain convergence rates for
the Stokes problem which are comparable to those for the Laplacian.

Multigrid methods are known to be the most efficient preconditioners
and solvers for diffusion problems. Nevertheless, for Stokes
equations, the divergence constraint makes the solution process more
complicated. A typical solution employs the use of block
preconditioners, e.~g.~\cite{ElmanSilvesterWathen02,KayLoghinWathen02,MurphyGolubWathen00,Kanschat05},
but their disadvantage is, that their
performance is limited by the inf-sup constant of the problem. This
could be avoided, if the multigrid method operated on the divergence
free subspace directly, and thus would not have to deal with the
saddle point problem at all. Such methods have been developed in
different context and have proven very successful as reported for
instance by Hiptmair~\cite{Hiptmair99multigrid} for Maxwell equations and
by Schöberl~\cite{Schoeberl99dissertation} for incompressible
elasticity with reduced integration.

The main ingredients into such a method are a smoother which operates
on the divergence free subspace and a grid transfer operator from
coarse to fine mesh which maps the coarse divergence free subspace
into the fine one. The second objective can be achieved by using a
mixed finite element discretization for which the weakly divergence
free functions are point-wise divergence free. For such a
discretization, the natural finite element embedding operator from
coarse to fine mesh does not increase the divergence of a
function. Discretizations of this type are available, such as
for instance in Scott and
Vogelius~\cite{ScottVogelius85,Vogelius83}, Neilan and
coauthors~\cite{GuzmanNeilan14,FalkNeilan13} and
Zhang~\cite{Zhang09tet,Zhang09rectangular}. Here, we focus on the
divergence conforming discontinuous Galerkin (DivDG) method of Cockburn,
Kanschat, and Schötzau~\cite{CockburnKanschatSchoetzau07} due to its
simplicity.

Following the approach by Schöberl~\cite{Schoeberl99dissertation},
in order to study smoothers for the Stokes equations, we first
consider a problem on the velocity space only with penalty for the
divergence. This leads to a singularly perturbed problem with an
operator with a large kernel.  When it comes to smoothers for such
operators, there are two basic options.  One approach is to smooth the
kernel space explicitly, as proposed for instance by
Hiptmair~\cite{Hiptmair99multigrid} and Xu in~\cite{HiptmairXu09}. The
other option was presented by Arnold, Falk, and Winther
in~\cite{ArnoldFalkWinther97Hdiv,ArnoldFalkWinther00} and smoothens the
kernel implicitly, while never employing an explicit description of it.

We follow the implicit approach and use the same domain decomposition
principle (i.e additive and multiplicative Schwarz methods and vertex
patches), but instead of the Maxwell or divergence dominated mass
matrix as in~\cite{ArnoldFalkWinther97Hdiv,ArnoldFalkWinther00} apply
it to the DivDG Stokes discretization. Then, we prove the convergence
of  the multigrid method with variable V-cycle algorithm for the
singularly perturbed problem.
The second pillar we rest on is the equivalence between
singularly perturbed, divergence dominated elliptic forms and mixed
formulations established by Schöberl
in~\cite{Schoeberl99dissertation,Schoeberl99}. This equivalence allows
us to apply the smoother to a mixed formulation of nearly
incompressible elasticity and then to proceed to the Stokes limit. As
far as we know, the combination of these techniques has not been
applied the DivDG method in~\cite{CockburnKanschatSchoetzau07}. Since our analysis
is based on domain decomposition, fundamental results are also drawn
from the seminal paper by Feng and
Karakashian~\cite{FengKarakashian01} on domain decomposition for
discontinuous Galerkin methods for elliptic problems.

There is a close relation between our technique and the smoother
suggested by Vanka in~\cite{Vanka86} for the MAC scheme: the
MAC scheme can be considered the lowest order case of the DivDG
methods (see~\cite{Kanschat08mac}). In this case, the subspace decomposition
structure of Vanka smoother corresponds to Neumann problems on cells,
while our smoother is based on Dirichlet problems for vertex patches.
Generalizations of the Vanka smoother have been applied
successfully to different other discretizations albeit their
velocity-pressure spaces are not matched in the sense
of~\eqref{eq:divergence-relation} (see for
instance\cite{Turek93,WobkerTurek09} and literature cited there).

Recently, an alternative preconditioning method for Stokes
discretizations of the same type as here has been introduced
in~\cite{AyusoBrezziMariniXuZikatanov13} by Ayuso et al. Their method
is based on auxiliary spaces introduced by Hiptmair and Xu
in~\cite{HiptmairXu09}.
The exact
sequence property of the divergence-conforming velocity element plays
a crucial role as in our scheme, but their preconditioner uses a multigrid method for
the biharmonic problem to solve the Stokes problem. As a consequence,
it is not possible to use the preconditioning method for no-slip
boundary conditions. On the other hand, it has been demonstrated
in~\cite{KanschatSharma14} that the multigrid method here can be lifted
to the biharmonic problem, providing an efficient method for clamped
boundary conditions.

The paper is organized as follows. In Section~\ref{sec:discretization}
we present the model problem and the DG discretization. The multigrid
method and domain decomposition smoother are derived in
Section~\ref{sec:multigrid}.  Section~\ref{sec:analysis} is devoted to
the convergence analysis of our preconditioning technique with the man
result in Theorem~\ref{theorem:main} on
page~\pageref{theorem:main}. The paper concludes with numerical
experiments in Section~\ref{sec:experiments}.

\section{The Stokes problem and its discretization}
\label{sec:discretization}

We consider discretizations of the Stokes equations
\begin{gather}
  \label{eq:Stokes}
  \arraycolsep2pt
  \begin{array}{rccclcll}
    -\triangle u &+& \nabla p &=& f &\quad& \text{in}& \Omega,
    \\
    \div u && &=& 0 && \text{in}& \Omega,
    \\
    && u &=& u^B &&\text{on }& \partial\Omega,
  \end{array}
\end{gather}
with no-slip boundary conditions on a bounded and convex domain
$\Omega\subset \R^d$ with dimension $d=2,3$. The natural solution
spaces for this problem are $V = H^1_0(\Omega;\R^d)$ for the velocity
$u$ and the space of mean value free square integrable functions $Q =
L^2_0(\Omega)$ for the pressure $p$, although we point out that other
well-posed boundary conditions do not pose a problem.

In order to obtain a finite element discretization, we partition the
domain $\Omega$ into a hierarchy of meshes
$\{\T_\ell\}_{\ell=0,\dots,L}$ of parallelogram and parallelepiped
cells in two and three dimensions, respectively. In view of multilevel
methods, the index $\ell$ refers to the mesh level defined as follows:
let a coarse mesh $\T_0$ be given. The mesh hierarchy is defined
recursively, such that the cells of $\T_{\ell+1}$ are obtained by
splitting each cell of $\T_\ell$ into $2^d$ congruent children
(refinement).  These meshes are nested in the sense that every cell of
$\T_\ell$ is equal to the union of its four children. We define the
mesh size $h_\ell$ as the maximum of the diameters of the cells of
$\T_\ell$. Due to the refinement process, we have $h_\ell = 2^{-\ell} h_0$.

By construction, these meshes are conforming in the sense that every
face of a cell is either at the boundary or a whole face of another
cell; nevertheless, local refinement and hanging nodes do not pose a
particular problem, since they can be treated
following~\cite{JanssenKanschat11,Kanschat04}. By
$\F_\ell$ we denote the set of all faces of the mesh $\T_\ell$, which
is composed of the set of interior faces $\F_\ell^i$ and the set of
all boundary faces $\F_\ell^\d$.

We introduce a short hand notation for integral forms on $\T_\ell$ and
on $\F_\ell$ by
\begin{xalignat*}{2}
  \form(\phi,\psi)_{\T_\ell}
  &= \sum_{T \in \T_\ell}\int_T \phi \odot\psi \,dx,
  &
  \forme(\phi,\psi)_{\F_\ell}
  &= \sum_{F\in \F_\ell}\int_F \phi \odot\psi  \, ds,
  \\
  \norm{\phi}_{\T_\ell}
  &= \biggl(\sum_{T \in \T_\ell}
  \int_T |\phi|^2\,dx\biggr)^{\frac{1}{2}},
  &
  \norm{\phi}_{\F_\ell}
  &= \biggl(\sum_{F\in \F_\ell}\int_F |\phi|^2 \, ds\biggr)^{\frac{1}{2}},
\end{xalignat*}
The point-wise multiplication operator $\phi\odot\psi$ refers to the product $\phi\psi$,
the scalar product $\phi\cdot\psi$ and the double contraction
$\phi:\psi$ for scalar, vector and tensor arguments, respectively. The
modulus $|\phi| = \sqrt{\phi\odot\phi}$ is defined accordingly.

In order to discretize~\eqref{eq:Stokes} on the mesh $\T_\ell$, we
choose discrete subspaces $X_\ell = V_\ell \times Q_\ell$, where
$Q_\ell \subset Q$.
Following\cite{CockburnKanschatSchoetzau07}, we employ discrete
subspaces $V_\ell$ of the space $\Hdiv_0(\Omega)$, where
\begin{align*}
  \Hdiv(\Omega) &= \bigl\{v\in L^2(\Omega;\R^d)
  \big| \div v \in L^2(\Omega)
  \bigr\},
  \\
  \Hdiv_0(\Omega) &= \bigl\{v\in \Hdiv(\Omega)
  \big| \ntrace{v} = 0 \quad\text{on } \d\Omega
  \bigr\}.
\end{align*}
Here, we choose the well-known Raviart--Thomas
space\cite{RaviartThomas77}, but we point out that any pair of
velocity spaces $V_\ell$ and pressure spaces $Q_\ell$ is admissible,
if the key relation
\begin{gather}
  \label{eq:divergence-relation}
  \div V_\ell = Q_\ell
\end{gather}
holds. The details of constructing the Raviart--Thomas space follow.

Each cell $T\in \T_\ell$ can be obtained as the image of
a linear mapping $\Psi_T$ of the reference cell $\rf T = [0,1]^d$. On the reference cell, we define two polynomial spaces: first, $\rf
Q_k$, the space of polynomials in $d$ variables, such that the degree
with respect to each variable does not exceed $k$. Second, we consider
the vector valued space of Raviart--Thomas polynomials $\rf V_k = \rf
Q_k^d + x\rf Q_k$. Polynomial spaces $V_T$ and $Q_T$ on the mesh cell
$T$ are obtained by the pull-back under the mapping $\Psi_T$ (see for
instance~\cite{ArnoldFalkWinther10}). The polynomial degree $k$ is
arbitrary, but chosen uniformly on the whole mesh. Thus, we will omit
the index $k$ from now on. Concluding this construction, we obtain the
finite element spaces
\begin{align*}
  V_\ell &= \bigl\{v\in\Hdiv_0(\Omega)
  \big| \forall T\in \T_\ell : v_{|T} \in V_T \bigr\},
  \\
  Q_\ell &= \bigl\{ q\in L^2_0(\Omega)
  \big| \forall T\in \T_\ell : q_{|T} \in Q_T \bigr\}.
\end{align*}

\subsection{Discontinuous Galerkin discretization}
While the fact that $V_\ell$ is a subspace of $\Hdiv_0(\Omega)$
implies continuity of the normal component of its functions across
interfaces between cells, this is not true for tangential
components. Thus, $V_\ell \not\subset H^1(\Omega;\R^d)$, and it cannot
be used immediately to discretize~\eqref{eq:Stokes}. We follow the
example in for
instance~\cite{CockburnKanschatSchoetzau07,KanschatSchoetzau08,KanschatRiviere10}
and apply a DG formulation to the discretization of the elliptic
operator. Here, we focus on the interior penalty
method\cite{Arnold82,Nitsche71}. Let $T_1$ and $T_2$ be two mesh cells
with a joint face $F$, and let $u_1$ and $u_2$ be the traces of a
function $u$ on
$F$ from $T_1$ and $T_2$, respectively. On this face $F$, we introduce
the averaging operator
\begin{gather}
  \mvl{u} = \frac{u_1 + u_2}{2}.
\end{gather}
In this notation, the interior penalty bilinear form reads
\begin{gather}
  \begin{split}
    \label{eq:IP}
    a_\ell(u,v) =& \form(\nabla u, \nabla v)_{\T_\ell}
    + 4\forme(\sigma_L\mvl{u\otimes\n},\mvl{v\otimes\n})_{\F_\ell^i}
    \\
    &- 2\forme(\mvl{\nabla u},\mvl{\n\otimes v})_{\F_\ell^i}
    - 2\forme(\mvl{\nabla v},\mvl{\n\otimes u})_{\F_\ell^i}
    \\
    &
    + 2\forme(\sigma_L u,v)_{\F_\ell^\d}
    - \forme(\d_n u,v)_{\F_\ell^\d}
    - \forme(\d_n v,u)_{\F_\ell^\d}.
  \end{split}
\end{gather}
The operator ``$\otimes$'' denotes the Kronecker product of two
vectors. We note that the term $4\mvl{u\otimes\n}:\mvl{v\otimes\n}$
actually denotes the product of the jumps of $u$ and $v$.

The discrete weak formulation of \eqref{eq:Stokes} reads now: find $(u_\ell,p_\ell)
\in V_\ell\times Q_\ell$, such that for all test functions $v_\ell \in
V_\ell$ and $q_\ell \in Q_\ell$ there holds
\begin{gather}
  \label{eq:Stokes-weak}
  \A_\ell\left(
    \begin{pmatrix}
      u_\ell\\p_\ell
    \end{pmatrix}
    ,
    \begin{pmatrix}
      v_\ell\\q_\ell
    \end{pmatrix}
\right)
  \equiv
  a_\ell(u_\ell,v_\ell)
  + \form(p_\ell, \div v_\ell)
  - \form(q_\ell, \div u_\ell)
  = \mathcal{F}(v_\ell, q_\ell) \equiv\form(f,v_\ell).
\end{gather}

Discussion on the existence and uniqueness of such solutions can be
found for instance in
~\cite{CockburnKanschatSchoetzau07,CockburnKanschatSchoetzauSchwab02,HansboLarson02,KanschatRiviere10}.
Here, we summarize, that is symmetric.  If $\sigma_L$ is sufficiently
large, the form $a_\ell(.,.)$ is positive definite independently of the
multigrid level $\ell \in [0,L]$, and that thus we can define a norm
on $V_\ell$ by
\begin{gather}
  \norm{v_\ell}_{V_\ell} = \sqrt{a_\ell(v_\ell,v_\ell)}.
\end{gather}
In order to obtain optimal convergence results and to satisfy
Proposition~\ref{proposition:estimate-3} below, $\sigma_L$ is chosen
as $\overline\sigma/h_{L}$, where $h_L$ is mesh size on the finest
level $L$ and $\overline\sigma$ is a positive constant depending on
the polynomial degree. By this choice, the bilinear
forms on lower levels are inherited from finer levels in the sense,
that
\begin{gather}
  \label{eq:5}
  a_\ell(u_\ell, v_\ell) = a_L(u_\ell,v_\ell), \quad \forall\; u_\ell,
  v_\ell \in V_\ell.
\end{gather}

 A particular feature of this method is
(see~\cite{CockburnKanschatSchoetzau05,CockburnKanschatSchoetzau07}),
that the solution $u_\ell$ is in the divergence free subspace
\begin{gather}
  V_\ell^0
  = \bigl\{ v_\ell \in V_\ell \big| \div v_\ell = 0 \bigr\},
\end{gather}
where the divergence condition is to be understood in the strong sense.

\begin{proposition}[Inf-sup condition]
  For any pressure function $q\in Q_\ell$, there exists a velocity
  function $v\in V_\ell$, satisfying
  \begin{gather}
    \label{eq:inf-sup}
    \inf_{q\in Q_\ell}
    \sup_{v\in V_\ell}
    \frac{\form(q,\div v)}{\norm{v}_{V_\ell} \norm{q}_{Q_\ell}}
    \geq  \gamma_\ell > 0 
  \end{gather}
 where $\gamma_\ell = c\sqrt{\frac{h_L}{h_\ell}} = c\sqrt{2^{\ell-L}}$
 and $c$ is a constant independent of the multigrid level $\ell$.
\end{proposition}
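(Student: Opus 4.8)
The plan is to exhibit, for each $q\in Q_\ell$, one admissible test velocity $v_\ell\in V_\ell$ with $\div v_\ell = q$ and a norm that can be bounded in terms of $\norm{q}_{Q_\ell}$, and then to estimate the supremum from below by this single choice. Since $\Omega$ is convex, hence Lipschitz, the classical right inverse of the divergence (the continuous inf--sup condition, or the Bogovskii construction) provides, for $q\in Q=L^2_0(\Omega)$, a field $v\in H^1_0(\Omega;\R^d)$ with $\div v = q$ and $\norm{\nabla v}_{L^2(\Omega)}\le C_\Omega\norm{q}_{L^2(\Omega)}$; here and below $C$ denotes a generic constant independent of $\ell$ and $L$.

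Let $\Pi_\ell$ denote the canonical Raviart--Thomas interpolation operator onto $V_\ell$. It is well defined on $H^1(\Omega;\R^d)$ and satisfies the commuting relation $\div\Pi_\ell v = P_\ell\div v$, where $P_\ell$ is the $L^2(\Omega)$--projection onto $Q_\ell$; this is exactly where the pairing $\div V_\ell = Q_\ell$ from \eqref{eq:divergence-relation} enters. Since $q\in Q_\ell$ we get $\div\Pi_\ell v = P_\ell q = q$, so $v_\ell:=\Pi_\ell v$ is an admissible candidate, and it only remains to bound $\norm{v_\ell}_{V_\ell}=\sqrt{a_\ell(v_\ell,v_\ell)}$.

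For the norm bound, note first that by Cauchy--Schwarz, the discrete trace inequality for polynomials on $\T_\ell$, and the fact that $\sigma_L=\overline\sigma/h_L\ge\overline\sigma/h_\ell$, there holds
\[
  a_\ell(w,w)\;\le\;C\Bigl(\norm{\nabla w}_{\T_\ell}^2
  + \forme(\sigma_L\mvl{w\otimes\n},\mvl{w\otimes\n})_{\F_\ell^i}
  + \forme(\sigma_L w,w)_{\F_\ell^\d}\Bigr)
  \qquad\forall\,w\in V_\ell .
\]
Apply this to $w=\Pi_\ell v$. The broken gradient is controlled by the $H^1$--stability of the Raviart--Thomas interpolant on the shape-regular parallelogram/parallelepiped meshes, $\norm{\nabla\Pi_\ell v}_{\T_\ell}\le C\norm{\nabla v}_{L^2(\Omega)}$. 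For the face terms, $v\in H^1_0(\Omega;\R^d)$ has no interior jumps and vanishes on $\d\Omega$, so on every face $F$ the relevant trace of $\Pi_\ell v$ equals that of $\Pi_\ell v-v$; combining the scaled trace inequality with the interpolation estimates $\norm{\Pi_\ell v-v}_T\le C h_\ell\norm{\nabla v}_T$ and $\norm{\nabla(\Pi_\ell v-v)}_T\le C\norm{\nabla v}_T$ gives $\norm{\Pi_\ell v-v}_F^2\le C h_\ell\norm{\nabla v}_{\omega_F}^2$, with $\omega_F$ the union of the cells meeting $F$. Multiplying by $\sigma_L=\overline\sigma/h_L$ and summing over faces (finite overlap of the patches $\omega_F$) bounds the face contributions by $C(h_\ell/h_L)\norm{\nabla v}_{L^2(\Omega)}^2$, which dominates the gradient term because $h_\ell\ge h_L$. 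Hence $\norm{v_\ell}_{V_\ell}^2\le C(h_\ell/h_L)\norm{q}_{L^2(\Omega)}^2$. Testing the quotient in \eqref{eq:inf-sup} with $v_\ell$ and using $\form(q,\div v_\ell)=\form(q,q)=\norm{q}_{Q_\ell}^2$ gives
\[
  \sup_{v\in V_\ell}\frac{\form(q,\div v)}{\norm{v}_{V_\ell}\norm{q}_{Q_\ell}}
  \;\ge\;\frac{\norm{q}_{Q_\ell}}{\norm{v_\ell}_{V_\ell}}
  \;\ge\;c\sqrt{\frac{h_L}{h_\ell}}\;=\;c\sqrt{2^{\ell-L}},
\]
using $h_\ell=2^{-\ell}h_0$, which is the assertion.

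The factor $\sqrt{h_L/h_\ell}$ is the crux of the statement: it is not an artifact of a lossy estimate but is produced genuinely by the level-independent penalty weight $\sigma_L\sim 1/h_L$ acting on the tangential jumps of a function living on the coarser mesh $\T_\ell$, where those jumps decay only like $h_\ell$. One expects this degradation to be sharp — it can be checked with a Fortin-type test on a single coarse vertex patch — so that $\gamma_\ell$ genuinely cannot be chosen independently of $\ell$. The only remaining technical work in the proof is assembling the standard Raviart--Thomas facts used above, namely well-posedness of $\Pi_\ell$ on $H^1$, the commuting-diagram property, $H^1$--stability, and the scaled trace estimates, all of which are classical on the mapped-parallelepiped meshes considered here.
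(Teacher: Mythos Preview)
Your argument is correct and is essentially the Fortin-operator construction that underlies the result the paper cites from Sch\"otzau--Schwab--Toselli; the paper does not reprove the statement but only explains how their $k$-dependence (stemming from $\sigma\sim k^2/h$) becomes the $\sqrt{h_L/h_\ell}$-dependence here, which is precisely the mechanism you isolate when you observe that the penalty weight $\sigma_L\sim 1/h_L$ multiplies face residuals of size $h_\ell$. So your route and the paper's coincide, with yours spelling out the details that the citation hides.
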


\begin{proof}
  The proof of this proposition can be found in~\cite[Section
  6.4]{SchoetzauSchwabToselli03}. Indeed, a different result is proven
  there, with $\gamma_\ell \approx 1/k$, where $k$ is the
  polynomial degree in the $hp$-method. Thorough study of the proof
  though reveals, that this $k$-dependence is due to the penalty
  parameter of the form $\sigma_\ell \approx k^2/h_\ell$. In our case, the
  penalty parameter depends on the fine mesh, not on $h_\ell$, such
  that $\sigma_\ell \approx (h_\ell/h_L)/h_\ell$, and that the role
  of the $k^2$ in the penalty is taken by the factor $h_\ell/h_L$.
\end{proof}

For any $u\in V_\ell$, we consider the following discrete Helmholtz
decomposition:
\begin{gather}
  \label{eq:Helmholtz decomposition}
  u = \naught{u} + u^{\perp}
\end{gather}
where $\naught{u} \in V_{\ell}^{0}$ is the divergence free
part and $u^{\perp}$ belongs to its
$a_\ell(.,.)$-orthogonal complement. For functions from this
complement holds the estimate:
\begin{lemma}
  \label{lemma:estimate-1}
  Let $u^{\perp}\in V_\ell$ be $a_\ell(.,.)$-orthogonal to $V_\ell^0$, that is,
  \begin{gather*}
    a_\ell(u^{\perp}, v) = 0 \quad \forall \;v\in V_\ell^0.
  \end{gather*}
  Then, there is a constant $\alpha>0$ such that
  \begin{gather}
    \label{eq:norm-estimate}
    \frac{\alpha}{d^2}\norm{\div u^{\perp}}^{2}
    \leq
    a_\ell(u^{\perp}, u^{\perp}) 
    \leq
    \frac1\gamma_\ell \norm{\div u^{\perp}}^{2},
  \end{gather}
  $\gamma_\ell$ is the inf-sup constant from inequality~\eqref{eq:inf-sup}.
\end{lemma}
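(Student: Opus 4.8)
The plan is to prove the two estimates in~\eqref{eq:norm-estimate} independently. The left-hand bound is elementary and uses neither the orthogonality hypothesis nor the inf-sup constant; the right-hand bound is the substantive one and combines the inf-sup condition~\eqref{eq:inf-sup} with the $a_\ell(\cdot,\cdot)$-orthogonality of $u^\perp$ to the kernel $V_\ell^0$ of the divergence. As a preliminary, note that $q:=\div u^\perp$ is admissible in~\eqref{eq:inf-sup}: by the key relation~\eqref{eq:divergence-relation} we have $q\in Q_\ell$, and since $u^\perp\in V_\ell\subset\Hdiv_0(\Omega)$ the divergence theorem gives $\int_\Omega q\,dx=\int_{\d\Omega}\ntrace{u^\perp}\,ds=0$, consistent with $Q_\ell\subset L^2_0(\Omega)$.

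For the lower bound I would start from the pointwise Cauchy--Schwarz inequality $|\div v|^2=\bigl|\sum_{i=1}^d\d_i v_i\bigr|^2\le d\sum_{i=1}^d|\d_i v_i|^2\le d\,|\grad v|^2$, valid on each cell $T\in\T_\ell$ with $v=u^\perp$; summing over the mesh yields $\norm{\div u^\perp}^2\le d\,\norm{\grad u^\perp}_{\T_\ell}^2$. It then remains to bound the broken gradient by the energy, $\norm{\grad u^\perp}_{\T_\ell}^2\le C\,a_\ell(u^\perp,u^\perp)$, which is precisely the coercivity of $a_\ell(\cdot,\cdot)$ for $\sigma_L$ chosen large enough (recalled around~\eqref{eq:5}), as the DG energy norm dominates $\norm{\grad\cdot}_{\T_\ell}$. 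Collecting the dimensional and coercivity constants into $\alpha$ gives $\tfrac{\alpha}{d^2}\norm{\div u^\perp}^2\le a_\ell(u^\perp,u^\perp)$; the power $d^2$ in the denominator is not sharp and merely absorbs the coercivity constant.

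The upper bound is where the orthogonality enters, and the key observation is that, because $a_\ell(\cdot,\cdot)$ is the inner product defining $\norm{\cdot}_{V_\ell}$, the subspace $V_\ell^0$ possesses a genuine orthogonal complement and Cauchy--Schwarz in $a_\ell(\cdot,\cdot)$ is sharp. I would use~\eqref{eq:inf-sup} in its equivalent form of a right inverse of the divergence bounded in $\norm{\cdot}_{V_\ell}$: for $q=\div u^\perp\in Q_\ell$ there is $z\in V_\ell$ with $\div z=\div u^\perp$ and $\norm{z}_{V_\ell}\le\gamma_\ell^{-1}\norm{\div u^\perp}$ (in finite dimensions one may take $z$ to be the $\norm{\cdot}_{V_\ell}$-smallest element of $V_\ell$ with the prescribed divergence, whose norm is controlled by~\eqref{eq:inf-sup}). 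Since $\div(z-u^\perp)=0$, we have $z-u^\perp\in V_\ell^0$, hence $a_\ell(u^\perp,z-u^\perp)=0$ by hypothesis and therefore $a_\ell(u^\perp,u^\perp)=a_\ell(u^\perp,z)$. Cauchy--Schwarz in $a_\ell(\cdot,\cdot)$ together with the bound on $z$ then gives
\begin{gather*}
  a_\ell(u^\perp,u^\perp)=a_\ell(u^\perp,z)\le\norm{u^\perp}_{V_\ell}\norm{z}_{V_\ell}\le\frac{1}{\gamma_\ell}\sqrt{a_\ell(u^\perp,u^\perp)}\;\norm{\div u^\perp};
\end{gather*}
dividing by $\sqrt{a_\ell(u^\perp,u^\perp)}$ (the case $u^\perp=0$ being trivial) and squaring yields the upper estimate in~\eqref{eq:norm-estimate}.

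The step I expect to demand the most care is the construction of $z$: one has to produce an element of $V_\ell$ --- not merely of $\Hdiv_0(\Omega)$ --- whose divergence is exactly $\div u^\perp$ and whose energy norm is bounded by $\gamma_\ell^{-1}\norm{\div u^\perp}$, i.e.\ one must recast~\eqref{eq:inf-sup} as a bounded right inverse of $\div\colon (V_\ell^0)^{\perp}\to Q_\ell$. Once this is in place the remainder is a two-line computation; when writing it out one should also keep track of the precise power of $\gamma_\ell$ that ends up on the right-hand side.
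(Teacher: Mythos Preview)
Your proposal is correct and matches the paper's proof essentially step for step: the lower bound via coercivity of $a_\ell(\cdot,\cdot)$ plus the pointwise estimate $\norm{\div v}^2\le d\,\norm{\grad v}_{\T_\ell}^2$, and the upper bound by producing from~\eqref{eq:inf-sup} an element $z\in V_\ell$ with $\div z=\div u^\perp$ and $\norm{z}_{V_\ell}\le\gamma_\ell^{-1}\norm{\div u^\perp}$, then exploiting $a_\ell$-orthogonality of $u^\perp$ to $V_\ell^0$. The only cosmetic difference is that the paper phrases the last step as ``$u^\perp$ is the minimum-norm element with its divergence'' rather than your Cauchy--Schwarz identity $a_\ell(u^\perp,u^\perp)=a_\ell(u^\perp,z)$, but these are the same observation; your caution about the resulting power of $\gamma_\ell$ is well taken, since both arguments actually deliver $a_\ell(u^\perp,u^\perp)\le\gamma_\ell^{-2}\norm{\div u^\perp}^2$.
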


\begin{proof}
  On the left side, we already argued above that $\sigma_L$ is chosen
  large enough such that $a_\ell(.,.)$ is uniformly positive
  definite. Thus, we have with a positive constant $\alpha$
  \begin{gather*}
    \alpha\|\nabla u^\perp\|_{\T_\ell}^2 \le a_\ell(u^{\perp},u^{\perp}).
  \end{gather*}
  But then,
  \begin{gather*}
    \form(\div u^{\perp}, \div u^{\perp})_{\Omega}
    \leq d^2\form (\grad u^{\perp},\grad u^{\perp} )_{\T_\ell} \le
     \frac{d^2}{\alpha} a_\ell(u^{\perp},u^{\perp}),
  \end{gather*}
  On the right side, let $q=\div u^{\perp}$.  Then $q\in Q_\ell$ due
  to~\eqref{eq:divergence-relation}. From~\eqref{eq:inf-sup}, we
  conclude that there is $u \in V_\ell$ such that $\div u = q$ and
  $\norm{u}_{V_\ell} \leq 1/\gamma_\ell \|q\|$.  On the other hand,
  $u^{\perp}$ is the error of the orthogonal projection into
  $V_\ell^0$. Thus, $u^{\perp}$ must be the element with minimal norm,
  and in particular $\norm{u^{\perp}}_{V_\ell}\le \norm{u}_{V_\ell}$.
\end{proof}

\subsection{The nearly incompressible problem}

We are going to prove convergence uniform with respect to the refinement level
$\ell$ of the proposed multigrid method for the Stokes
problem by deviating twice. First, we provide estimates robust with
respect to the parameter $\epsilon$
of the nearly incompressible problem: find $(u_\ell, p_\ell) \in
V_\ell \times Q_\ell$ such that for all $(v_\ell, q_\ell) \in V_\ell
\times Q_\ell$ there holds
\begin{gather}
  \label{eq:nearly-incompressible}
  \A_\ell\left(
    \begin{pmatrix}
      u_\ell\\p_\ell      
    \end{pmatrix}
    ,
    \begin{pmatrix}
      v_\ell\\q_\ell
    \end{pmatrix}
    \right)
    + \epsilon \form(p_\ell, q_\ell)
  = \mathcal{F}(v_\ell, q_\ell).
\end{gather}

This problem is connected with the simpler penalty bilinear form (see for
instance also~\cite{HansboLarson02})
\begin{gather}
  \label{eq:elliptic-1}
  A_{\ell,\epsilon}(u_\ell,v_\ell) \equiv a_{\ell}(u_\ell, v_\ell)
  + \epsilon^{-1} \form(\div u_\ell, \div  v_\ell)
\end{gather}
and the singularly perturbed, elliptic problem: find $u_\ell\in
V_\ell$ such that for all $v_\ell\in V_\ell$ there holds
\begin{gather}
  \label{eq:elliptic-2}
  A_{\ell,\epsilon}(u_\ell,v_\ell) = \form(f,v_\ell).
\end{gather}

\begin{lemma}
  \label{lemma:equivalence-1}
  Let $(u_m, p_m)$ be the solution to~\eqref{eq:nearly-incompressible}
  and $u_e$ be the solution to~\eqref{eq:elliptic-2}. Then,
  if~\eqref{eq:divergence-relation} holds, the following equations
  hold true:
  \begin{gather*}
    u_m = u_e, \qquad\text{and}\qquad \epsilon p_m = \div u_m = \div u_e.
  \end{gather*}
\end{lemma}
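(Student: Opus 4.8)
The plan is to test the mixed system~\eqref{eq:nearly-incompressible} with well-chosen pairs $(v_\ell,q_\ell)$ and to use the matching condition~\eqref{eq:divergence-relation} to eliminate the pressure. First I would put $v_\ell=0$ and let $q_\ell$ range over $Q_\ell$. Then the terms $a_\ell(\cdot,\cdot)$ and $\form(p_m,\div\cdot)$ disappear and what is left is
\begin{gather*}
  \form(\epsilon p_m - \div u_m, q_\ell) = 0 \qquad \forall\, q_\ell \in Q_\ell .
\end{gather*}
Now $p_m\in Q_\ell$ by assumption, and $\div u_m \in \div V_\ell = Q_\ell$ by~\eqref{eq:divergence-relation}, so $\epsilon p_m - \div u_m$ is itself an element of $Q_\ell$ and may be inserted as a test function; hence it vanishes, which gives $\epsilon p_m = \div u_m$ --- the second identity of the lemma for the mixed solution.

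Next I would feed this back into the remaining equations. Taking $q_\ell=0$ in~\eqref{eq:nearly-incompressible} yields $a_\ell(u_m,v_\ell)+\form(p_m,\div v_\ell)=\form(f,v_\ell)$ for all $v_\ell\in V_\ell$; replacing $p_m$ by $\epsilon^{-1}\div u_m$ turns this into
\begin{gather*}
  a_\ell(u_m,v_\ell) + \epsilon^{-1}\form(\div u_m, \div v_\ell) = \form(f,v_\ell)
  \qquad \forall\, v_\ell \in V_\ell ,
\end{gather*}
which is exactly $A_{\ell,\epsilon}(u_m,v_\ell)=\form(f,v_\ell)$, i.e.\ the variational problem~\eqref{eq:elliptic-2} solved by $u_m$.

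Finally I would invoke uniqueness: $A_{\ell,\epsilon}$ is the sum of $a_\ell(\cdot,\cdot)$, which is positive definite on $V_\ell$ once $\sigma_L$ is chosen large enough, and of the nonnegative bilinear form $\epsilon^{-1}\form(\div\cdot,\div\cdot)$, hence it is coercive and~\eqref{eq:elliptic-2} has a unique solution. Consequently $u_m=u_e$, and combining this with the first step gives $\epsilon p_m = \div u_m = \div u_e$, as claimed.

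I do not expect any serious obstacle here. The only point that needs care --- and the only place where~\eqref{eq:divergence-relation} really enters --- is the observation in the first step that $\epsilon p_m - \div u_m$ is not merely orthogonal to $Q_\ell$ but actually lies in $Q_\ell$, so that the orthogonality forces it to be zero in the strong (pointwise) sense rather than only weakly. Everything beyond that is routine.
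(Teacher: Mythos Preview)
Your proof is correct and follows essentially the same route as the paper: test with $v_\ell=0$ to obtain $\epsilon p_m=\div u_m$ pointwise via~\eqref{eq:divergence-relation}, then substitute to see that $u_m$ satisfies~\eqref{eq:elliptic-2}. The only cosmetic difference is that the paper closes by exhibiting the reverse implication (constructing $p_e=\epsilon^{-1}\div u_e$ and checking that $(u_e,p_e)$ solves~\eqref{eq:nearly-incompressible}), whereas you invoke coercivity and uniqueness of~\eqref{eq:elliptic-2} directly; both are equally valid.
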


\begin{proof}
  Testing~\eqref{eq:nearly-incompressible} with $v_\ell=0$ and
  $q_\ell\in Q_\ell$ yields
  \begin{gather*}
    - \form(\div u_m, q_\ell) + \epsilon (p_m,q_\ell) = 0
    \quad\forall\; q_\ell\in Q_\ell.
  \end{gather*}
  Due to~\eqref{eq:divergence-relation}, this translates to the
  point-wise equality $\epsilon p_m = \div u_m$. Substituting $p_m$
  in~\eqref{eq:nearly-incompressible} and testing with the pair
  $(v_\ell, \div v_\ell)$, which is possible again due
  to~\eqref{eq:divergence-relation}, we obtain that $u_m$
  solves~\eqref{eq:elliptic-2}.
  
  If on the other hand $u_e$ solves~\eqref{eq:elliptic-2}, we
  introduce $p_e = \frac{1}{\epsilon}\div u_e$, which translates to
  \begin{gather*}
    - \form(\div u_e, q_\ell) + \epsilon (p_e,q_\ell) = 0
    \quad\forall\; q_\ell\in Q_\ell,
  \end{gather*}
  corresponding to~\eqref{eq:nearly-incompressible} tested with $(0,q_\ell)$.
  On the other hand,~\eqref{eq:nearly-incompressible} tested with
  $(v_\ell,0)$ is obtained directly from~\eqref{eq:elliptic-2}
  substituting $p_e$. Thus, the equivalence is proven.
\end{proof}

In order to help keeping the notation separate, we adopt the following
convention: the subscript $\epsilon$ is dropped wherever
possible. Furthermore, curly letters refer to the mixed form, while
straight capitals refer to operators on the velocity space only. Thus:
\begin{align*}
  a_\ell(u,v) &\text{ the vector valued interior penalty form} \\
  A_{\ell}(u,v) &\text{ the form of the
    singularly perturbed, elliptic problem~\eqref{eq:elliptic-2}}\\
  \A_\ell\left(
    \begin{pmatrix}
      u\\p
    \end{pmatrix}
    ,
    \begin{pmatrix}
      v\\q
    \end{pmatrix}
  \right) &\text{ the mixed bilinear form~\eqref{eq:nearly-incompressible}}
\end{align*}
Similarly, capital letters like in $R_\ell$ for the smoother~\eqref{eq:smoother-elliptic}
refer to the singularly perturbed, elliptic problem, while $\mathcal R_\ell$
is the corresponding symbol for the Stokes smoother~\eqref{eq:smoother-Stokes}.
Additionally, we associate operators with bilinear forms using the
same symbol:
\begin{xalignat*}{3}
  A_{\ell,\epsilon}&: V_\ell\to V_\ell & \form(A_{\ell,\epsilon} u,v)
  &= A_{\ell,\epsilon}\form(u,v) = A_{\ell}(u,v) = A_L(u,v) & \forall u,v &\in V_\ell
  \\
  \A_{\ell,\epsilon}&: X_\ell\to X_\ell & \form(\A_{\ell,\epsilon} x,y)
  &= \A_{\ell,\epsilon}\form(x,y) = \A_\ell(x,y) = \A_L(x,y) & \forall x,y &\in X_\ell
\end{xalignat*}

\section{Multigrid method}
\label{sec:multigrid}

In Section~\ref{sec:discretization}, we introduced hierarchies of
meshes $\{\T_\ell\}$. Due to the nestedness of mesh cells, the finite
element spaces associated with these meshes are nested as well:
\begin{gather*}
  \arraycolsep5pt
  \begin{matrix}
    V_0 &\subset& V_1 &\subset& \dots &\subset& V_L,
    \\
    Q_0 &\subset& Q_1 &\subset& \dots &\subset& Q_L.
    \\
    X_0 = V_0 \times Q_0 &\subset& X_1 &\subset& \dots
    &\subset& V_L \times Q_L = X_L. \\
  \end{matrix}
\end{gather*}
This relation also extends to the divergence free
subspaces, see for instance~\cite{KanschatSharma14}:
\begin{gather}
  \arraycolsep5pt
  \begin{matrix}
    V_0^0 &\subset& V_1^0 &\subset& \dots &\subset& V_L^0.
  \end{matrix}
\end{gather}

The nestedness of the spaces implies that there is a sequence of
natural injections $\I_\ell: X_\ell \to X_{\ell+1}$ of the form
$\I_\ell(v_\ell, q_\ell) = (I_{\ell,u}v_\ell, I_{\ell,p}q_\ell)$, such
that
\begin{alignat}{4}
  I_{\ell,u}&:\;& V_\ell &\to V_{\ell+1},
  &\qquad
  I_{\ell,p}&:\;& Q_\ell &\to Q_{\ell+1},
  \\
  I_{\ell,u}&:& V_\ell^0 &\to V_{\ell+1}^0.
\end{alignat}
The $L^2$-projection from $X_{\ell+1} \to X_\ell$ is defined by
$\I^t_\ell(v_\ell, q_\ell) = (I^t_{\ell,u}v_\ell,I^t_{\ell,p}q_\ell)$
with
\begin{xalignat}{2}
  \label{eq:3}
  \form(v_{\ell+1}-I^t_{\ell,u}v_{\ell+1}, w_\ell) &= 0 \; \forall w_\ell
  \in V_\ell 
  &
  \form(q_{\ell+1}-I^t_{\ell,p}q_{\ell+1}, r_\ell) &= 0 \; \forall r_\ell
  \in Q_\ell.
\end{xalignat}
The $\A$-orthogonal projection $\mathcal{P}_{\ell}$ from $(V_L \times
Q_L) \rightarrow (V_{\ell} \times Q_{\ell})$ is defined by
\begin{gather}
  \label{eq: Ritz projection}
  \A_L \form(\mathcal P_{\ell}
  \begin{pmatrix}
    u\\p
  \end{pmatrix}
  ,
  \begin{pmatrix}
    v_{\ell}\\q_{\ell}
  \end{pmatrix}
  )
  = \A_L \form(
  \begin{pmatrix}
    u\\p
  \end{pmatrix}
  ,
  \begin{pmatrix}
    v_{\ell}\\q_{\ell}
  \end{pmatrix}
  )
\end{gather}
for all $ (u,p) \in (V_L \times Q_L), (v_{\ell},q_{\ell}) \in V_{\ell}
\times Q_{\ell}$.  Similarly, The $A$-orthogonal projection $P_{\ell}$
from $V_L \rightarrow V_{\ell} $ is defined by
\begin{gather}
\label{eq: ritz projection}
  A_L( P_{\ell}u, v_{\ell})
  = A_L(u,v_{\ell})
\end{gather}
for all $ u \in V_L, v_{\ell} \in V_{\ell}$. 

\subsection{The V-cycle algorithm}
\label{sec:V-cycle}
In this subsection we define V-cycle multigrid preconditioners
$\B_{\ell,\epsilon}$ and $B_{\ell,\epsilon}$ for the operators
$\A_{\ell,\epsilon}$ and $A_{\ell,\epsilon}$, respectively. For
simplicity of the presentation, we drop the index $\epsilon$.

First, we define the action of the multigrid preconditioner
$\B_\ell : X_\ell \rightarrow X_\ell $ recursively as the
multigrid V-cycle with $m(\ell)\geq 1$ pre- and post-smoothing steps. Let
$\mathcal{R}_\ell$ be a suitable smoother. Let $\B_0 =
\A_0^{-1}$. For $\ell \ge 1$, define the action of $\B_\ell$ on a
vector $\L_\ell = (f_\ell,g_\ell)$ by

\begin{subequations}
\begin{enumerate}
\item Pre-smoothing: begin with $(u_0, p_0) = (0,0)$ and let
  \begin{gather}
    \begin{pmatrix}
      u_{i}\\p_{i}
    \end{pmatrix}
    =
    \begin{pmatrix}
      u_{i-1}\\p_{i-1}
    \end{pmatrix}
    + \mathcal{R}_{\ell}
    \left(\L_\ell - \A_\ell
      \begin{pmatrix}
        u_{i-1}\\p_{i-1}
      \end{pmatrix}
    \right)
    \quad i= 1, \dots, m(\ell),
  \end{gather}
  
\item Coarse grid correction:
  \begin{gather}
    \begin{pmatrix}
      u_{m(\ell)+1}\\p_{m(\ell)+1}
    \end{pmatrix}
    =
    \begin{pmatrix}
      u_{m(\ell)}\\p_{m(\ell)}
    \end{pmatrix}
    + \B_{\ell-1} \I_{\ell-1}^t \left(\L_\ell - \A_\ell
      \begin{pmatrix}
        u_{m(\ell)}\\p_{m(\ell)} 
      \end{pmatrix}
    \right),
  \end{gather}

\item Post-smoothing:
  \begin{gather}
    \begin{pmatrix}
      u_{i}\\p_{i}
    \end{pmatrix}
    =
    \begin{pmatrix}
      u_{i-1}\\p_{i-1}
    \end{pmatrix}
    + \mathcal{R}_{\ell}
    \left(\L_\ell - \A_\ell
      \begin{pmatrix}
        u_{i-1}\\p_{i-1}
      \end{pmatrix}
    \right),
    \quad i= m(\ell)+2, \dots, 2m(\ell)+1
  \end{gather}
\item Assign: 
  \begin{gather}
    \B_\ell \L_\ell =
    \begin{pmatrix}
      u_{2m(\ell)+1}\\p_{2m(\ell)+1}
    \end{pmatrix}
  \end{gather}
\end{enumerate}  
\end{subequations}

We distinguish between the standard and variable V-cycle algorithms by
the choice
\begin{gather*}
  m(\ell) =
  \begin{cases}
    m(L) &\text{standard V-cycle,} \\
     m(L) 2^{L-\ell} &\text{variable V-cycle,}
  \end{cases}
\end{gather*}
where the number $m(L)$ of smoothing steps on the finest level is a
free parameter. We
refer to $\B_L$ as the V-cycle preconditioner of $\A_L$. The
iteration
\begin{gather}
  \label{eq:6}
      \begin{pmatrix}
      u_{k+1}\\p_{k+1}
    \end{pmatrix}
    =
    \begin{pmatrix}
      u_{k}\\p_{k}
    \end{pmatrix}
    + \B_L \left(\L_L - \A_L
      \begin{pmatrix}
        u_{k}\\p_{k}
      \end{pmatrix}
    \right)
\end{gather}
is the V-cycle iteration.

The definition of the preconditioner $B_\ell: V_\ell \to V_\ell$ for the elliptic
operator $A_\ell$ follows the same concept, but dropping the pressure variables.

\subsection{Overlapping Schwarz smoothers}
\label{sec:smoother}

In this subsection, we define a class of smoothing operators $\mathcal
R_\ell$ based on a subspace decomposition of the space $X_\ell$.  Let
$\mathcal{N}_\ell$ be the set of vertices in the triangulation $\T_\ell$, and let
$\T_{\ell,\upsilon}$ be the set of cells in $\T_\ell$ sharing the
vertex $\upsilon$. They form a triangulation with $N(N>0)$ subdomains or
patches which we denote by $\{\Omega_{\ell,\upsilon}\}_{\upsilon = 1}^{N}$.

The subspace $X_{\ell,\upsilon} = V_{\ell,\upsilon} \times
Q_{\ell,\upsilon}$ consists of the functions in $X_\ell$ with support
in $\Omega_{\ell,\upsilon}$. Note that this implies homogeneous slip
boundary conditions on $\partial\Omega_{\ell,\upsilon}$ for the
velocity subspace $V_{\ell,\upsilon}$ and zero mean value on
$\Omega_{\ell,\upsilon}$ for the pressure subspace
$Q_{\ell,\upsilon}$. The Ritz projection $\mathcal P_{\ell,\upsilon}:
X_\ell \to X_{\ell,\upsilon}$ is defined by the equation
\begin{gather}
  \label{eq:Ritz-Stokes}
  \A_\ell\form(\mathcal P_{\ell,\upsilon}
  \begin{pmatrix}
    u_\ell\\p_\ell
  \end{pmatrix}
  ,
  \begin{pmatrix}
    v_{\ell,\upsilon}\\q_{\ell,\upsilon}  
  \end{pmatrix}
  )
  = \A_\ell\form(
  \begin{pmatrix}
    u_\ell\\p_\ell
  \end{pmatrix}
  ,
  \begin{pmatrix}
    v_{\ell,\upsilon}\\q_{\ell,\upsilon}
  \end{pmatrix}
  )
  \qquad \forall
  \begin{pmatrix}
    v_{\ell,\upsilon}\\ q_{\ell,\upsilon}
  \end{pmatrix}
  \in X_{\ell,\upsilon}.
\end{gather}
Note that each cell belongs to not more than four (eight in 3D)
patches $\T_{\ell,\upsilon}$, one for each of its vertices.

Then we define the additive Schwarz smoother
\begin{gather}
  \label{eq:smoother-Stokes}
  \mathcal{R}_\ell=
  \eta~\sum_{\substack{
      \upsilon \in \mathcal{N}_\ell
    }}
  \mathcal P_{\ell,\upsilon}\A^{-1}_\ell
\end{gather} 
where $\eta \in (0,1]$ is a scaling factor, $\mathcal R_\ell$ is $L^2$ symmetric and
positive definite.

Similarly, we define smoothers of the singularly perturbed elliptic
operator $A_\ell$, namely, $P_{\ell,\upsilon}: V_\ell \to
V_{\ell,\upsilon}$ is defined as
\begin{gather}
  \label{eq:Ritz-elliptic}
  A_\ell\form(P_{\ell,\upsilon} u_\ell,v_{\ell,\upsilon})
  = A_\ell\form(u_\ell,v_{\ell,\upsilon})
  \qquad \forall v_{\ell,\upsilon}\in V_{\ell,\upsilon},
\end{gather}
and the additive Schwarz smoother is

\begin{gather}
  \label{eq:smoother-elliptic}
  R_\ell=
  \eta~\sum_{\substack{
      \upsilon \in \mathcal{N}_\ell
    }}
  P_{\ell,\upsilon}A^{-1}_\ell.
\end{gather} 

%


\section{Convergence analysis}
\label{sec:analysis}

In this section, we provide a proof of the convergence for the
variable V-cycle iteration with additive Schwarz preconditioning
method. Our proof is based on the assumption that the domain $\Omega$
is bounded and convex, which will be omitted for simplicity in the
statement of following theorems and propositions. Our main result is:

\begin{theorem}
  \label{theorem:main}
  The multilevel iteration $\mathcal I - \mathcal B_L\mathcal
  A_L$ for the Stokes problem~\eqref{eq:Stokes-weak}
  with the variable V-cycle operator $\mathcal B_L$ defined in
  Section~\ref{sec:V-cycle} employing the smoother $\mathcal R_\ell$
  defined in equation~\eqref{eq:smoother-Stokes} with suitably small
  scaling factor $\eta$ is a contraction with contraction number
  independent of the mesh level $L$.
\end{theorem}

\begin{proof}
  First, we consider the nearly incompressible
  problem~\eqref{eq:nearly-incompressible}. For this weak formulation,
  we have by Theorem~\ref{theorem:equivalence}, that the multigrid
  method $\mathcal I - \mathcal B_{L,\epsilon} \mathcal A_{L,\epsilon}$ is equivalent to the
  method $I -  B_{L,\epsilon} A_{L,\epsilon}$ applied to the singularly perturbed
  problem~\eqref{eq:elliptic-2} in the velocity space.
  
  Convergence of the multilevel iteration $I - B_{L,\epsilon} A_{L,\epsilon}$ is shown in
  Theorem~\ref{theorem:singularly-perturbed} for all $\epsilon > 0$
  with a contraction number $\delta < 1$ independent of $L$ and
  $\epsilon$. Thus, by Theorem ~\ref{theorem:equivalence}, the same
  holds for $\mathcal I - \mathcal B_{L,\epsilon}\mathcal A_{L,\epsilon}$ with positive
  $\epsilon$.
  
  Finally, in~\eqref{eq:nearly-incompressible} we can let $\epsilon$
  converge to zero. The limit yields the well-posed Stokes
  problem~\eqref{eq:Stokes-weak}, and since the contraction number
  $\delta$ is independent of $\epsilon$, we obtain uniform convergence
  with respect to the mesh level $L$ in the limit $\epsilon\to 0$.
\end{proof}


The theorems and lemmas of the following subsections serve to
establish the building blocks of the proof of
Theorem~\ref{theorem:main}.

\subsection{The singularly perturbed problem}
\label{sec:singularly-perturbed}

%
\begin{theorem}
  \label{theorem:singularly-perturbed}
  Let $R_\ell$ be the smoother defined
  in~\eqref{eq:smoother-elliptic} with suitably small
  scaling factor $\eta$. Then,
  the multilevel iteration $ I -  B_L
  A_L$ with the variable V-cycle operator $ B_L$ defined in
  Section~\ref{sec:V-cycle} is a contraction with contraction number
  independent of the mesh level $L$ and the parameter $\epsilon$.
\end{theorem}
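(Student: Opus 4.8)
The plan is to deduce the statement from the standard convergence theory for variable V-cycle iterations that avoids full elliptic regularity, applied to the operators $A_{\ell,\epsilon}$ and the additive Schwarz smoother $R_\ell$. That theory reduces the assertion to two ingredients, each of which must be established with constants independent of \emph{both} $L$ and $\epsilon$: (i) a smoothing property of $R_\ell$ measured on the spectral scale of $A_{\ell,\epsilon}$, and (ii) a stable multilevel decomposition of $V_L$ into the subspaces $\{V_{\ell,\upsilon}\}_{\ell,\upsilon}$ with respect to the inner product $A_L(\cdot,\cdot)$, where the form inheritance~\eqref{eq:5} is used to identify $A_{\ell,\epsilon}$ with the restriction of $A_L$ to $V_\ell$. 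The variable choice $m(\ell)=m(L)2^{L-\ell}$ is exactly what removes the need for a regularity estimate and, as will be seen, is also what absorbs the level-dependent inf--sup constant $\gamma_\ell = c\sqrt{2^{\ell-L}}$ appearing in Lemma~\ref{lemma:estimate-1} and in~\eqref{eq:inf-sup}.

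For (i): the smoother $R_\ell$ is additive Schwarz over the vertex patches $\{\Omega_{\ell,\upsilon}\}$, and since every cell lies in at most $2^d$ patches, the number of overlapping subspaces is bounded by a geometric, $\epsilon$-independent constant. A colouring/Cauchy--Schwarz argument then yields $\|R_\ell^{1/2}A_\ell R_\ell^{1/2}\|\le C\eta$ with $C$ depending only on $d$ and the shape regularity of $\T_0$; choosing $\eta$ small makes $I-R_\ell A_\ell$ an $A_\ell$-contraction and furnishes the required smoothing estimate uniformly in $\ell$ and $\epsilon$, because $A_{\ell,\epsilon}$ enters only through the exact local solves $A_\ell^{-1}$ restricted to $X_{\ell,\upsilon}$ (equivalently $V_{\ell,\upsilon}$), which are coercive independently of $\epsilon$.

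For (ii) we invoke the discrete Helmholtz decomposition~\eqref{eq:Helmholtz decomposition}. Given $v\in V_L$, write $v=v^0+v^\perp$ with $v^0\in V_L^0$ and $v^\perp$ its $a_L$-orthogonal complement. On the divergence-free part, $A_{\ell,\epsilon}(v^0,\cdot)=a_\ell(v^0,\cdot)=a_L(v^0,\cdot)$ carries no $\epsilon$, and, crucially, the spaces $V_\ell^0$ are nested; hence decomposing $v^0$ is precisely the multilevel stable decomposition problem for the interior-penalty DG discretization of the vector Laplacian, for which the vertex-patch Schwarz decomposition is known to be stable (the local solvers see the local divergence-free fields, so nothing is lost) — here one borrows the domain-decomposition estimates of Feng and Karakashian~\cite{FengKarakashian01}. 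For the complement, Lemma~\ref{lemma:estimate-1} gives $a_L(v^\perp,v^\perp)\simeq \|\div v^\perp\|^2$ up to the factors $\gamma_L^{-1}$ and $d^2/\alpha$, so the full $A_{\ell,\epsilon}$-energy of $v^\perp$ is comparable to $(1+\epsilon^{-1})\|\div v^\perp\|^2$. One distributes $q:=\div v^\perp\in Q_\ell$ into patch-supported pieces $q=\sum_\upsilon q_\upsilon$, $q_\upsilon\in Q_{\ell,\upsilon}$, with $\sum_\upsilon\|q_\upsilon\|^2\le C\|q\|^2$ (possible because $Q_\ell$ is discontinuous, piecewise polynomial), lifts each $q_\upsilon$ to $w_\upsilon\in V_{\ell,\upsilon}$ with $\div w_\upsilon=q_\upsilon$ via the local inf--sup condition, and so obtains a patchwise decomposition of $v^\perp$ whose aggregated energy is controlled by $\gamma_\ell^{-1}(1+\epsilon^{-1})\|q\|^2$. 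The $\epsilon^{-1}$ is harmless since it appears symmetrically on both sides of the stability estimate; the remaining factor $\gamma_\ell^{-1}=c^{-1}2^{(L-\ell)/2}$ grows towards the coarse levels, but this is exactly where the $2^{L-\ell}$-fold smoothing of the variable V-cycle on level $\ell$ compensates, keeping the contraction number bounded uniformly in $L$ and $\epsilon$.

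The main obstacle is step (ii), and within it the $\epsilon$- and level-uniform control of the complement $v^\perp$. A naive localization $v^\perp\mapsto\chi_\upsilon v^\perp$ by a partition of unity fails because $\div(\chi_\upsilon v^\perp)=\chi_\upsilon\div v^\perp+\nabla\chi_\upsilon\cdot v^\perp$ has a second term with no small prefactor, which destroys the $\epsilon^{-1}\|\div\cdot\|^2$ bookkeeping; working on the pressure side and re-lifting through the local inf--sup operator (the mechanism of Schöberl and of Arnold--Falk--Winther) circumvents this, but then the $\gamma_\ell$-dependence of the lifts must be tracked with care and shown to be matched \emph{exactly} by the variable smoothing counts. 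Verifying this matching, together with checking that the Feng--Karakashian-type stable decomposition of the DG Laplacian persists after restriction to the nested subspaces $V_\ell^0$, is the technical core of the argument; everything else is the routine machinery of the regularity-free V-cycle theory.
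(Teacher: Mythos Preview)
Your proposal has the right key ideas---finite-overlap smoothing, a Helmholtz splitting to isolate the $\epsilon$-sensitive divergence contribution, Feng--Karakashian stability for the $a_\ell$ part, and the matching of $\gamma_\ell^{-1}$ against the variable smoothing count $m(\ell)$---but it packages them differently from the paper. The paper does \emph{not} seek a multilevel stable decomposition of $V_L$; it works level by level, verifying the two hypotheses of Proposition~\ref{proposition:singularly-perturbed}: nonnegativity of $I-R_\ell A_\ell$ (your item (i)) and a bound $(R_\ell^{-1}u,u)\le\beta_\ell A_L(u,u)$ required only for $u=(I-P_{\ell-1})w$. For the latter it uses Lemma~\ref{lemma:equivalence-2} to reduce to a single-level patch decomposition of $u$ on $V_\ell$, handles the $a_\ell$ part by Proposition~\ref{proposition:estimate-3}, and controls the $\epsilon^{-1}\|\div\cdot\|^2$ term by invoking the right-hand inequality of Lemma~\ref{lemma:estimate-1}, $a_\ell(u^\perp,u^\perp)\le\gamma_\ell^{-1}\|\div u\|^2$, directly---no explicit pressure decomposition and no local lifting are performed. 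Your decompose-$q$-then-lift route (the Arnold--Falk--Winther/Sch\"oberl mechanism) is also legitimate and produces the same $\gamma_\ell^{-1}$, but your exposition blurs the levels: you take the Helmholtz split on $V_L$ (where in fact $\gamma_L=O(1)$), yet then write $q\in Q_\ell$ and invoke a multilevel decomposition whose level-transfer step is never specified. The paper's two-level framing is tidier precisely because the Helmholtz split is taken on $V_\ell$, so $\gamma_\ell^{-1}$ appears exactly once per level as $\beta_\ell$ and is then absorbed by $m(\ell)=m(L)2^{L-\ell}$ via Lemma~\ref{lemma:5}.
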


The proof of this theorem is postponed to page~\pageref{proof} and relies on
\begin{proposition}
  \label{proposition:singularly-perturbed}
 If $R_\ell$ satisfies the conditions:
  \begin{gather}
    \label{eq:2}
    A_L\bigl((I-R_\ell A_\ell) w,w\bigr) \geq 0,
    \quad \forall  w \in V_\ell
  \end{gather}
  and
  \begin{gather}
    \label{eq:1}
    (R^{-1}_\ell[I-P_{\ell-1}]w,[I-P_{\ell-1}]w)
    \leq \beta_\ell A_L([I-P_{\ell-1}]w,[I-P_{\ell-1}]w),
    \quad \forall  w \in V_\ell
  \end{gather} 
  where $\beta_\ell = O({\frac{1}{\gamma_\ell}})$ is defined in
  equation~\eqref{def:beta} below. Then
  \begin{gather}
    0 \leq A_L\bigl((I-B_\ell A_\ell)w,w\bigr) \leq \delta A_L(w,w),
    \quad \forall  w \in V_\ell
  \end{gather} 
  where $\delta = \frac{\hat{C}}{1+\hat{C}}$ and $\hat{C}$ are defined
  in Lemma~\ref{lemma:5}.
\end{proposition}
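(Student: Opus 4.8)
The plan is to prove the assertion by induction on the level $\ell$, following the variable V-cycle theory of Bramble, Pasciak and Xu. For $\ell=0$ one has $B_0=A_0^{-1}$, hence $I-B_0A_0=0$ and both inequalities hold trivially. For the inductive step the first task is to record the standard recursion for the error propagation operator $E_\ell:=I-B_\ell A_\ell$. Using that $A_\ell=A_L$ on $V_\ell$ (equation~\eqref{eq:5}), the Galerkin relation $A_L(v,z)=A_L(P_{\ell-1}v,z)$ for all $v\in V_\ell$, $z\in V_{\ell-1}$ (which turns the coarse-grid correction step into multiplication by the error operator $(I-P_{\ell-1})+E_{\ell-1}P_{\ell-1}$), and the fact that $G_\ell:=(I-R_\ell A_\ell)^{m(\ell)}$ is self-adjoint in the $A_L$-inner product (since $R_\ell$ is $L^2$-symmetric), one obtains
\begin{gather*}
  E_\ell = G_\ell\bigl[(I-P_{\ell-1})+E_{\ell-1}P_{\ell-1}\bigr]G_\ell.
\end{gather*}
Condition~\eqref{eq:2} states precisely that $I-R_\ell A_\ell$ is $A_L$-self-adjoint and positive semidefinite; together with the positive definiteness of $R_\ell$ this yields $0\le G_\ell\le I$ in the $A_L$-inner product, in particular $\|G_\ell w\|_{A_L}\le\|w\|_{A_L}$.

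Next I would test the recursion with $w\in V_\ell$ and set $v:=G_\ell w$. The Pythagorean identity for the $A_L$-orthogonal projection $P_{\ell-1}$ gives
\begin{gather*}
  A_L(E_\ell w,w)=\|(I-P_{\ell-1})v\|_{A_L}^2+A_L\bigl(E_{\ell-1}P_{\ell-1}v,P_{\ell-1}v\bigr).
\end{gather*}
The lower bound $A_L(E_\ell w,w)\ge0$ is then immediate, since both terms are non-negative (the second by the inductive hypothesis applied to $P_{\ell-1}v\in V_{\ell-1}$). For the upper bound I would insert the inductive hypothesis $A_L(E_{\ell-1}\cdot,\cdot)\le\delta A_L(\cdot,\cdot)$ on $V_{\ell-1}$ and use $\|P_{\ell-1}v\|_{A_L}^2=\|v\|_{A_L}^2-\|(I-P_{\ell-1})v\|_{A_L}^2$ to arrive at
\begin{gather*}
  A_L(E_\ell w,w)\le(1-\delta)\,\|(I-P_{\ell-1})G_\ell w\|_{A_L}^2+\delta\,\|G_\ell w\|_{A_L}^2.
\end{gather*}

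The heart of the matter — and the step I expect to be the main obstacle — is the smoothing/approximation estimate, which is the content of Lemma~\ref{lemma:5}:
\begin{gather*}
  \|(I-P_{\ell-1})G_\ell w\|_{A_L}^2\le\hat C\,A_L\bigl((I-G_\ell^2)w,w\bigr),\qquad\forall w\in V_\ell,
\end{gather*}
with $\hat C$ independent of $\ell$ and $L$. To establish it I would pair $(I-P_{\ell-1})G_\ell w$ with an arbitrary $\phi$ that is $A_L$-orthogonal to $V_{\ell-1}$ and of unit $A_L$-norm, note that then $A_L((I-P_{\ell-1})G_\ell w,\phi)=A_L(G_\ell w,\phi)$, write this as the $L^2$ pairing $(A_\ell G_\ell w,\phi)$, split it by Cauchy--Schwarz through $R_\ell^{\pm1/2}$, and bound the two factors separately before taking the supremum over $\phi$: the factor carrying $\phi$ is $(R_\ell^{-1}\phi,\phi)^{1/2}\le\beta_\ell^{1/2}$ by~\eqref{eq:1} (applied to $\phi=(I-P_{\ell-1})\phi$), while the factor carrying $w$ reduces, using the $A_L$-self-adjointness of the operators involved, to $A_L(R_\ell A_\ell G_\ell^2 w,w)^{1/2}$, which a spectral computation using~\eqref{eq:2} together with the elementary bound $\mu(1-\mu)^{2m}\le\tfrac{1}{2m}\bigl(1-(1-\mu)^{2m}\bigr)$ for $\mu\in(0,1]$ controls by $\bigl(\tfrac{1}{2m(\ell)}A_L((I-G_\ell^2)w,w)\bigr)^{1/2}$. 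This identifies $\hat C=\sup_\ell\beta_\ell/(2m(\ell))$, and here the variable V-cycle is indispensable: by~\eqref{eq:inf-sup} and \eqref{def:beta} one has $\beta_\ell=O(1/\gamma_\ell)=O(2^{(L-\ell)/2})$, whereas $m(\ell)=m(L)2^{L-\ell}$, so the ratio is $O(1/m(L))$ uniformly in $\ell$ and $L$.

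Finally, to close the induction I would substitute $\|G_\ell w\|_{A_L}^2=A_L(w,w)-A_L((I-G_\ell^2)w,w)$ into the upper bound. The coefficient of the non-negative term $A_L((I-G_\ell^2)w,w)$ is then $(1-\delta)\hat C-\delta$, which vanishes exactly for the choice $\delta=\hat C/(1+\hat C)$, leaving $A_L(E_\ell w,w)\le\delta A_L(w,w)$; since $\hat C<\infty$ this $\delta$ satisfies $\delta<1$. Together with the lower bound this is precisely the claimed estimate at level $\ell$, which completes the induction.
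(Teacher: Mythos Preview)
Your proof is correct and matches the paper's approach in the appendix: induction on the level, the error-propagation recursion $E_\ell=G_\ell[(I-P_{\ell-1})+E_{\ell-1}P_{\ell-1}]G_\ell$, the Cauchy--Schwarz splitting through $R_\ell^{\pm 1/2}$ together with~\eqref{eq:1}, and the spectral telescoping bound $\mu(1-\mu)^{2m}\le\tfrac{1}{2m}\bigl(1-(1-\mu)^{2m}\bigr)$ to absorb the smoothing factor. One minor point: Lemma~\ref{lemma:5} is only the uniform bound $\beta_\ell/(2m(\ell))\le\hat C$, not the full smoothing/approximation estimate you attribute to it --- but you derive that estimate correctly anyway, so this is a citation slip rather than a gap.
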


\begin{proof}
  In the case of self-adjoint operators $A_\ell$ which are inherited
  from a common bilinear form $a(.,.)$, this proposition would be part of
  the standard multigrid theory if $\beta_\ell$ were constant.
  Its proof can be adapted from similar theorems
  in~\cite{BraessHackbusch83,Bramble93,ArnoldFalkWinther97Hdiv}.
  We will prove the version needed here in the appendix.
\end{proof}
In the remainder of this section we use several propositions and
lemmas to establish our smoother $R_{\ell}$ satisfies the assumptions
of Proposition~\ref{proposition:singularly-perturbed}. For  $u\in
(I-P_{\ell-1})w$ with arbitrary $w\in V_\ell$, it follows from the discrete
Helmholtz decomposition in Section~\ref{sec:discretization} and the projection
operator $P_{\ell,\upsilon}$ in Section~\ref{sec:smoother} that $u$ admits a
local discrete Helmholtz decomposition
\begin{gather}
  \label{eq:local Helmholtz decomp}
  u_{\upsilon} = \naught{u}_{\upsilon} + u^{\perp}_{\upsilon}
\end{gather}

\begin{lemma}
  \label{lemma:equivalence-2}
  Given $L^2$-symmetric positive definite $R_{\ell}$ defined in
 ~\ref{sec:smoother} and symmetric positive definite $A_L(\cdot,\cdot)$
  defined in~\eqref{eq:elliptic-1} , there exists a constant $\eta \in
  (0,1]$ independent of $\ell$ such that
  \begin{gather}
    \eta (R_\ell^{-1}u,u) =
    \inf_{\substack{ u_{\upsilon}\in V_{\ell,\upsilon} \\
        \Sigma_{\upsilon}u_{\upsilon}=u
      }}
    \sum_{\substack{\upsilon \in \mathcal N_\ell
      }}
    A_L(u_{\upsilon},u_{\upsilon})
  \end{gather}
  \begin{proof}
    The following proof can be found in~\cite{ArnoldFalkWinther97Hdiv}
    for the $L^2$-inner product instead of $a_\ell(.,.)$. We
    copy it here to ascertain that it does not depend on the actual
    structure of the operator $A_L$ since it is purely algebraic. Thus, it
    applies to the operator $A_L$ in this paper as it applies to the
    different operator applied there.
    Recall that 
     \begin{gather}
      R_\ell =
      \eta~\sum_{\substack{
          \upsilon \in \mathcal{N}_\ell
        }}
      P_{\ell,\upsilon}A^{-1}_\ell =
       \eta~\sum_{\substack{
          \upsilon \in \mathcal{N}_\ell
        }}
        P_{\ell,\upsilon}A^{-1}_L.
    \end{gather}
    From
    \begin{gather}
      u=
      \sum_{\substack{
          \upsilon \in \mathcal{N}_\ell
        }}
      u_\upsilon
    \end{gather}
    we get 
    \begin{align}
      \eta (R_\ell^{-1}u,u) &= \eta \sum_{\substack{
          \upsilon \in \mathcal{N}_\ell
        }}
      (R_\ell^{-1}u,u_\upsilon)  \\
      &= \eta \sum_{\substack{
          \upsilon \in \mathcal{N}_\ell
        }}
      (A_L P_{\ell,\upsilon}A_L^{-1}R_\ell^{-1}u,u_\upsilon) \\
        &\leq  \eta^{\frac{1}{2}}\left\{\sum_{\substack{
            \upsilon \in \mathcal{N}_\ell
          }}
        (A_L \eta P_{\ell,\upsilon}A_L ^{-1}R_\ell^{-1}u, A_L^{-1}R_\ell^{-1}u)\right\}^{\frac{1}{2}} 
      \left\{ \sum_{\substack{
            \upsilon \in \mathcal{N}_\ell
          }}
        (A_L u_\upsilon,u_\upsilon)\right\}^{\frac{1}{2}} \\
      &= \eta^{\frac{1}{2}}\left\{ (A_L u, A_L^{-1}R^{-1}_{\ell}u)\right\}^{\frac{1}{2}} \left\{\sum_{\substack{
            \upsilon \in \mathcal{N}_\ell
          }} 
        (A_L u_\upsilon,u_\upsilon) \right\}^{\frac{1}{2}} \\
      &=\eta^{\frac{1}{2}} \left\{ (u, R^{-1}_{\ell}u)\right\}^{\frac{1}{2}} \left\{\sum_{\substack{
            \upsilon \in \mathcal{N}_\ell
          }} 
        (A_L u_\upsilon,u_\upsilon) \right\}^{\frac{1}{2}} 
    \end{align}
    The above inequality works for arbitrary splitting, hence we have 
    \begin{gather}
      \eta (R_\ell^{-1}u,u) \leq
      \sum_{\substack{\upsilon \in \mathcal N_\ell
        }}
      A_L(u_{\upsilon},u_{\upsilon})
    \end{gather}
    For the choice $u_{\upsilon}=P_{\ell,\upsilon}P_{\ell}A_L^{-1}R^{-1}u$ we get
    \begin{gather}
      \eta (R_\ell^{-1}u,u) =
      \inf_{\substack{ u_{\upsilon}\in V_{\ell,\upsilon} \\
          \Sigma_{\upsilon}u_{\upsilon}=u
        }}
      \sum_{\substack{\upsilon \in \mathcal N_\ell
        }}
      A_L(u_{\upsilon},u_{\upsilon})
    \end{gather}
  \end{proof}
\end{lemma}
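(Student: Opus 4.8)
The plan is to treat this as an instance of the standard algebraic identity for additive Schwarz operators: the assertion is purely algebraic, so no structural property of $A_L$ enters beyond symmetry and positive definiteness, and the argument in \cite{ArnoldFalkWinther97Hdiv} (given there for the $L^2$-inner product) transfers verbatim with $A_L$ in place of that inner product. The facts I would rely on are that $R_\ell=\eta\sum_{\upsilon\in\mathcal N_\ell}P_{\ell,\upsilon}A_L^{-1}$, that each $P_{\ell,\upsilon}$ is the $A_L$-orthogonal projection onto $V_{\ell,\upsilon}$ and hence $A_L$-self-adjoint and idempotent, that $A_L$ and $A_\ell$ induce the same bilinear form on $V_\ell$ (so $P_{\ell,\upsilon}A_L^{-1}$ and $P_{\ell,\upsilon}A_\ell^{-1}$ agree on $V_\ell$, which is why the smoother may be written with either inverse), and that $R_\ell$ is $L^2$-symmetric positive definite — hence invertible, so that $R_\ell^{-1}$ is meaningful — which holds for any $\eta>0$ since each $P_{\ell,\upsilon}A_L^{-1}$ is $L^2$-symmetric positive semidefinite and the local spaces $V_{\ell,\upsilon}$ span $V_\ell$, the vertex patches covering $\Omega$.

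\textbf{Step 1 (lower bound for the infimum).} I would fix an arbitrary admissible splitting $u=\sum_\upsilon u_\upsilon$ with $u_\upsilon\in V_{\ell,\upsilon}$, write $\eta(R_\ell^{-1}u,u)=\eta\sum_\upsilon(R_\ell^{-1}u,u_\upsilon)$, and — since $u_\upsilon\in V_{\ell,\upsilon}$ and $P_{\ell,\upsilon}$ is the $A_L$-orthogonal projection onto $V_{\ell,\upsilon}$ — rewrite each term as $(R_\ell^{-1}u,u_\upsilon)=A_L(P_{\ell,\upsilon}A_L^{-1}R_\ell^{-1}u,u_\upsilon)$. Writing the scalar $\eta$ as $\eta^{1/2}\cdot\eta^{1/2}$ and applying the Cauchy--Schwarz inequality in the $A_L$-inner product over the index $\upsilon$ peels off the factor $\bigl(\sum_\upsilon A_L(u_\upsilon,u_\upsilon)\bigr)^{1/2}$; the complementary factor simplifies — using idempotency and $A_L$-self-adjointness of the $P_{\ell,\upsilon}$ together with the telescoping relation $\eta\sum_\upsilon P_{\ell,\upsilon}A_L^{-1}R_\ell^{-1}u=R_\ell R_\ell^{-1}u=u$ — to $\eta^{1/2}(R_\ell^{-1}u,u)^{1/2}$. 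This gives $\eta(R_\ell^{-1}u,u)\le\eta^{1/2}(R_\ell^{-1}u,u)^{1/2}\bigl(\sum_\upsilon A_L(u_\upsilon,u_\upsilon)\bigr)^{1/2}$, hence $\eta(R_\ell^{-1}u,u)\le\sum_\upsilon A_L(u_\upsilon,u_\upsilon)$; as the splitting was arbitrary, $\eta(R_\ell^{-1}u,u)$ bounds the infimum from below.

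\textbf{Step 2 (the infimum is attained).} I would then exhibit a minimizing splitting, namely $u_\upsilon=\eta P_{\ell,\upsilon}A_L^{-1}R_\ell^{-1}u$ (equivalently $\eta P_{\ell,\upsilon}P_\ell A_L^{-1}R_\ell^{-1}u$, since $P_{\ell,\upsilon}P_\ell=P_{\ell,\upsilon}$), which lies in $V_{\ell,\upsilon}$ by construction. It is admissible because $\sum_\upsilon u_\upsilon=\eta\sum_\upsilon P_{\ell,\upsilon}A_L^{-1}R_\ell^{-1}u=R_\ell R_\ell^{-1}u=u$, and running the same simplification as in Step 1, now with equality throughout, gives $\sum_\upsilon A_L(u_\upsilon,u_\upsilon)=\eta(R_\ell^{-1}u,u)$. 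Together with Step 1 this establishes the identity.

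I do not expect a genuine obstacle. The one point requiring care is the bookkeeping in the collapse of the complementary Cauchy--Schwarz factor, i.e.\ verifying that $\sum_\upsilon A_L\bigl(\eta P_{\ell,\upsilon}A_L^{-1}R_\ell^{-1}u,\,A_L^{-1}R_\ell^{-1}u\bigr)=(R_\ell^{-1}u,u)$, which uses simultaneously that $P_{\ell,\upsilon}$ is the $A_L$-orthogonal projection onto $V_{\ell,\upsilon}$, the relation $\eta\sum_\upsilon P_{\ell,\upsilon}A_L^{-1}=R_\ell$, and the invertibility of $R_\ell$; beyond that the proof is the single Cauchy--Schwarz step of Step 1 and the elementary identities surrounding it.
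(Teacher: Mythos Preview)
Your proposal is correct and follows essentially the same argument as the paper: both rewrite $\eta(R_\ell^{-1}u,u)$ via the projection identity $(R_\ell^{-1}u,u_\upsilon)=A_L(P_{\ell,\upsilon}A_L^{-1}R_\ell^{-1}u,u_\upsilon)$, apply Cauchy--Schwarz in the $A_L$-inner product summed over $\upsilon$, collapse the complementary factor using $\eta\sum_\upsilon P_{\ell,\upsilon}A_L^{-1}=R_\ell$, and then exhibit the minimizing splitting $u_\upsilon=\eta P_{\ell,\upsilon}A_L^{-1}R_\ell^{-1}u$. Your version is in fact slightly more careful than the paper's, which omits the factor $\eta$ in its stated minimizer $u_\upsilon=P_{\ell,\upsilon}P_\ell A_L^{-1}R_\ell^{-1}u$ (without it the sum is $\eta^{-1}u$, not $u$).
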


\begin{lemma}
  \label{lemma:estimate-2}
  Given the local Helmholtz decomposition in~\eqref{eq:local Helmholtz decomp}. For any
  $u^{\perp}_{\upsilon}\in V_{\ell,\upsilon}$ , there exists constant
  $C_1$ independent of multigrid level satisfying:
  \begin{gather}
    \sum_{\upsilon \in \mathcal{N}_\ell}
    \norm{\div u^{\perp}_{\upsilon}}^{2}
    \leq
    C_1 \sum_{\upsilon \in \mathcal{N}_\ell}
    a_\ell(u^{\perp}_{\upsilon}, u^{\perp}_{\upsilon}) 
  \end{gather}
\end{lemma}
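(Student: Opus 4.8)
The plan is to reduce the global estimate to a purely local one by working cell by cell and exploiting that each cell of $\T_\ell$ lies in at most $2^d$ vertex patches. First I would observe that for a fixed vertex patch $\Omega_{\ell,\upsilon}$, the function $u^\perp_\upsilon \in V_{\ell,\upsilon}$ satisfies homogeneous normal boundary conditions on $\partial\Omega_{\ell,\upsilon}$, so $u^\perp_\upsilon$ restricted to this patch is itself an element of $\Hdiv_0(\Omega_{\ell,\upsilon})$. Hence the argument used in the left inequality of Lemma~\ref{lemma:estimate-1} applies verbatim on the patch: pointwise on each cell $T$ one has $|\div w|^2 \le d^2 |\grad w|^2$ (since the divergence is a partial trace of the gradient, and the factor $d^2$ is the dimension-dependent constant from the Cauchy--Schwarz inequality relating the trace to the full tensor), so that
\begin{gather*}
  \norm{\div u^\perp_\upsilon}_{\T_\ell}^2
  \le d^2 \form(\grad u^\perp_\upsilon, \grad u^\perp_\upsilon)_{\T_\ell}.
\end{gather*}

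Next I would bound the right-hand side by the full interior penalty form. Because $a_\ell(\cdot,\cdot)$ is uniformly positive definite once $\sigma_L$ is chosen large enough (as recalled in the proof of Lemma~\ref{lemma:estimate-1}), there is a constant $\alpha > 0$, independent of $\ell$, with $\alpha\norm{\grad u^\perp_\upsilon}_{\T_\ell}^2 \le a_\ell(u^\perp_\upsilon, u^\perp_\upsilon)$. Combining the last two displays yields, for each individual patch,
\begin{gather*}
  \norm{\div u^\perp_\upsilon}^2
  \le \frac{d^2}{\alpha}\, a_\ell(u^\perp_\upsilon, u^\perp_\upsilon).
\end{gather*}
Summing over all $\upsilon \in \mathcal N_\ell$ then gives the claim with $C_1 = d^2/\alpha$, which is independent of the mesh level since $\alpha$ and $d$ are. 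The point worth spelling out is that no overlap counting is even needed on the left: the sum over $\upsilon$ of the local quantities appears on both sides, so the estimate passes through term by term. (If instead one wanted to relate $\sum_\upsilon \norm{\div u^\perp_\upsilon}^2$ to a \emph{global} quantity one would need the finite-overlap bound, but that is not what is being asked here.)

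The only genuine subtlety, and the step I would be most careful about, is the uniform positive-definiteness constant $\alpha$ on the \emph{patch-level} form. Since $a_\ell$ on the vertex patch is just the restriction of the global form (interior faces of the patch carry the same penalty terms, and the patch boundary faces carry Dirichlet-type penalty terms from the global form), and since the penalty parameter is frozen at $\sigma_L = \overline\sigma/h_L$ with $h_\ell/h_L \ge 1$, the coercivity constant inherited from the global form works for every patch simultaneously, uniformly in $\ell$ and $\upsilon$. I would cite the well-posedness references (\cite{CockburnKanschatSchoetzau07}, \cite{KanschatRiviere10}) for this and then close the argument as above; everything else is the elementary pointwise divergence-gradient inequality already invoked in Lemma~\ref{lemma:estimate-1}.
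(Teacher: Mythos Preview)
Your argument is correct and matches the paper's own proof: apply the left inequality of Lemma~\ref{lemma:estimate-1} (which in fact holds for every element of $V_\ell$, not just the $a_\ell$-orthogonal complement) to each $u^\perp_\upsilon \in V_{\ell,\upsilon}\subset V_\ell$, then sum over $\upsilon$. You are also right that no finite-overlap argument is needed here, since both sides of the inequality are already sums over $\upsilon$; the paper mentions the overlap bound but it plays no actual role in this particular estimate.
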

\begin{proof}
  It follows from Lemma~\ref{lemma:estimate-1} that the estimate $\norm{\div u^{\perp}}^{2}
  \leq C a_\ell(u^{\perp}, u^{\perp}) $ hold for all $u^{\perp}\in
  V_\ell$. It is easy to see that $V_{\ell,\upsilon}$ is a subspace of
  $V_{\ell}$ for any $\upsilon$, so the estimate are also valid on any
  patch. In 2-D case, one cell could at most be sharing by four
  patches(eight patches in 3D). Hence there exists a constant $C_1$
  independent of multigrid level such that the estimates holds for the
  summation of local estimates.
\end{proof}

\begin{proposition}
  \label{proposition:estimate-3}
  Given the overlapping subspace decomposition of $V_\ell$
  in~\ref{sec:smoother} and the interior penalty bilinear form
  $a_{\ell}(u, v)$ in~\eqref{eq:IP}.  Assume $\sigma_\ell$ is chosen
  sufficiently large, the following estimate holds on each level
  $\ell$.  Then, there is a constant $C_2$ which is independent of
  multigrid level such that for any $u \in V_\ell$ holds
  \begin{gather}
    \sum_{\upsilon \in \mathcal{N}_\ell}
    a_{\ell}(u_{\upsilon}, u_{\upsilon})
    \leq
    C_2a_{\ell}(u,u)
  \end{gather}
\end{proposition}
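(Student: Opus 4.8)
The plan is to prove the estimate
\[
\sum_{\upsilon\in\mathcal N_\ell} a_\ell(u_\upsilon,u_\upsilon)\le C_2\, a_\ell(u,u)
\]
by working with the individual terms making up $a_\ell(\cdot,\cdot)$. The decomposition $u=\sum_\upsilon u_\upsilon$ here is the natural one associated with a partition of unity subordinate to the vertex patches: one fixes continuous, piecewise (multi-)linear hat functions $\phi_\upsilon$ with $\sum_\upsilon\phi_\upsilon\equiv 1$, $0\le\phi_\upsilon\le1$, $\|\nabla\phi_\upsilon\|_{L^\infty}\le c/h_\ell$, supported on $\Omega_{\ell,\upsilon}$, and sets $u_\upsilon$ to be a suitable local representative with $u_\upsilon$ supported in $\Omega_{\ell,\upsilon}$ and $\sum_\upsilon u_\upsilon=u$; on a cell $T$ only the $O(1)$ hat functions of its vertices are active. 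Since $a_\ell$ expands into a volume term $\|\nabla\cdot\|_{\T_\ell}^2$, interior-face jump/penalty terms with weight $\sigma_L=\overline\sigma/h_L$, consistency terms, and boundary-face terms, the strategy is to bound each piece of $\sum_\upsilon a_\ell(u_\upsilon,u_\upsilon)$ separately, using the finite overlap of the patches (each cell in at most $2^d$ patches, each face in at most $2^{d-1}$ patches, after~\cite{FengKarakashian01}) together with the product/Leibniz rule for $\nabla(\phi_\upsilon u)$ and inverse estimates on the finite element space $V_\ell$.

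The key steps, in order, are: (i) Write $u_\upsilon=\mathcal I_\ell(\phi_\upsilon u)$ where $\mathcal I_\ell$ is the appropriate interpolation/projection into $V_\ell$ that preserves the patch support and the partition property; verify $\sum_\upsilon u_\upsilon=u$ (this follows since $\mathcal I_\ell$ reproduces $V_\ell$ and $\sum\phi_\upsilon u=u$). (ii) Bound the volume contribution: $\|\nabla u_\upsilon\|_{T}^2\lesssim \|\nabla(\phi_\upsilon u)\|_T^2 \lesssim \|\phi_\upsilon\nabla u\|_T^2+\|u\nabla\phi_\upsilon\|_T^2$, and for the second term use the $h_\ell^{-2}$ bound from $\nabla\phi_\upsilon$ absorbed by the $L^2$-norm of $u$, then control $h_\ell^{-1}\|u\|_T$ by the full $a_\ell$-energy; here one must invoke the fact that $a_\ell(u,u)$ controls an $h_\ell$-weighted $L^2$-norm via a Poincaré-type/broken-$H^1$ argument on the patch (this is where convexity and the uniform coercivity of $a_\ell$ enter). (iii) Bound the interior-face terms: on a face $F$ the jump of $u_\upsilon$ involves both the jump of $u$ weighted by $\mvl{\phi_\upsilon}$ and the jump of $\phi_\upsilon$ weighted by $\mvl u$; since $\phi_\upsilon$ is globally continuous its jump vanishes across interior faces, so $\mvl{[u_\upsilon\otimes\n]}=\mvl{\phi_\upsilon}\,\mvl{[u\otimes\n]}$ pointwise and the penalty term for $u_\upsilon$ is bounded by that for $u$ with constant $\le 1$; summing over $\upsilon$ costs only the finite overlap factor. (iv) Handle the consistency and boundary terms analogously, using trace and inverse inequalities on $V_\ell$ to keep the $\sigma_L$ scaling consistent; again continuity of $\phi_\upsilon$ kills the problematic contributions. (v) Sum over patches, using finite overlap, to conclude with $C_2$ independent of $\ell$.

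The main obstacle I expect is step (ii): the term $\|u\,\nabla\phi_\upsilon\|_{\T_\ell}^2 \lesssim h_\ell^{-2}\sum_\upsilon\|u\|_{\Omega_{\ell,\upsilon}}^2 \lesssim h_\ell^{-2}\|u\|_\Omega^2$ must be absorbed into $a_\ell(u,u)$, but $a_\ell(u,u)$ only scales like $\|\nabla u\|^2$, which does \emph{not} dominate $h_\ell^{-2}\|u\|^2$ for general $u$. The resolution is that the decomposition is not applied to an arbitrary $u\in V_\ell$ but, as the surrounding text signals, to functions arising in the multigrid analysis where one exploits the local Helmholtz splitting~\eqref{eq:local Helmholtz decomp}: the divergence-free part $\naught u_\upsilon$ is handled by the standard stable-decomposition theory for $H^1$-type forms on patches (a broken Poincaré inequality on $\Omega_{\ell,\upsilon}$ controls $h_\ell^{-1}\|\naught u_\upsilon\|$ by $a_\ell(\naught u_\upsilon,\naught u_\upsilon)^{1/2}$ because $\naught u_\upsilon$ has zero normal trace on $\partial\Omega_{\ell,\upsilon}$), while the $a_\ell$-orthogonal complement part is controlled through $\|\div\cdot\|$ via Lemma~\ref{lemma:estimate-1} and Lemma~\ref{lemma:estimate-2}. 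So the genuinely delicate point is establishing the patchwise Poincaré/stability estimate for the $\Hdiv$-conforming DG space with the correct $h_\ell^{-1}$ scaling and a constant uniform in $\ell$; for parallelogram/parallelepiped meshes this follows from a scaling argument to the reference patch plus the coercivity~\eqref{eq:5} of $a_\ell$, but it is the step that requires the convexity and mesh-regularity hypotheses to be used in an essential way.
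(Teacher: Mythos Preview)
Your partition-of-unity construction $u_\upsilon=\mathcal I_\ell(\phi_\upsilon u)$ and the term-by-term bookkeeping are exactly the Feng--Karakashian machinery the paper is invoking; the paper's own proof consists of nothing more than the remark that $\sigma_L=\overline\sigma/h_L\ge\overline\sigma/h_\ell$ and a pointer to~\cite[p.~1361]{FengKarakashian01}, so your plan is in the right spirit and considerably more detailed than what the authors wrote.

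However, your resolution of the obstacle in step~(ii) does not work. You want to bound $\sum_\upsilon a_\ell(u_\upsilon,u_\upsilon)$, and the problematic contribution is $h_\ell^{-2}\|u\|_\Omega^2$ coming from $\|u\,\nabla\phi_\upsilon\|$. Invoking the local Helmholtz split $u_\upsilon=\naught{u}_\upsilon+u_\upsilon^\perp$ is misdirected here: that split is used in the paper to control the $\epsilon^{-1}\|\div u_\upsilon\|^2$ part of $A_L$ (Lemmas~\ref{lemma:estimate-1} and~\ref{lemma:estimate-2}), not the $a_\ell$ part that this proposition is about. Worse, your patchwise Poincar\'e argument is circular: you write $h_\ell^{-1}\|\naught{u}_\upsilon\|\lesssim a_\ell(\naught{u}_\upsilon,\naught{u}_\upsilon)^{1/2}$, but this bounds $\|\naught{u}_\upsilon\|$ by the very quantity $a_\ell(u_\upsilon,u_\upsilon)$ you are trying to estimate from above, so summing just returns you to the left-hand side. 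A Poincar\'e inequality on a patch of size $h_\ell$ cannot turn $h_\ell^{-2}\|u\|_\Omega^2$ into $a_\ell(u,u)$ for arbitrary $u\in V_\ell$. If one really needs to absorb that term, the relevant mechanism is the restriction to $u=(I-P_{\ell-1})w$ together with an approximation property of $P_{\ell-1}$ (the coarse correction playing the role of the coarse space in~\cite{FengKarakashian01}), not the Helmholtz decomposition; the paper, for its part, simply asserts that the bound with level-independent $C_2$ is standard from~\cite{FengKarakashian01} and does not engage with this point.
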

\begin{proof}
  For a fixed $L$, the penalty constant $\sigma_\ell$ is
  ${\overline{\sigma}}/{h_L}$ which is greater than
  ${\overline{\sigma}}/{h_\ell}$. For the latter, this is a standard
  result: the proof and details on the choice of $\overline{\sigma}$
  can be found in~\cite[p. 1361]{FengKarakashian01}.
\end{proof}

\begin{proof}[Proof of Theorem~\ref{theorem:singularly-perturbed}]
  \label{proof}
  Recall the definition of $A_{L}$-orthogonal projections $P_\ell$ and
  $P_{\ell,\nu}$ which restrict the projection on
  $\Omega_{\ell,\nu}$(zero
  elsewhere). Following~\cite{ArnoldFalkWinther97Hdiv}, we show that
  if $0<\eta\leq 1/4$ , the smoother $R_{\ell}$ satisfies the first
  condition in Theorem~\ref{theorem:singularly-perturbed}.
  
   For  $w\in V_\ell$
  \begin{gather}
    A_L([I-R_\ell A_\ell]w,w) = A_L(w,w) - \eta \sum_{ \upsilon \in
      \mathcal N_\ell}A_L(P_{\ell,\upsilon}w,w)  
  \end{gather} 
  but 
  \begin{gather}
    A_L(P_{\ell,\upsilon}w,w) =
    A_L(P_{\ell,\upsilon}w,P_{\ell,\upsilon}w)
    \leq  A_L(w,w)^{\frac{1}{2}} A_L(P_{\ell,\upsilon}w,P_{\ell,\upsilon}w)^{\frac{1}{2}}
  \end{gather} 
  so
  \begin{gather}
    \sum_{\substack{
        \upsilon \in \mathcal{N}_\ell
      }}
    A_L(P_{\ell,\upsilon}w,w) 
    \leq 
    \sum_{\substack{
        \upsilon \in \mathcal{N}_\ell
      }}
   A_L(w,w)
    \leq
    4A_L(w,w)
  \end{gather}
  Hence the first hypothesis holds.

  Thus, it remains to check the second condition which could be
  reduced to the following problem: 
  for $u = (I - P_{\ell-1})w$ (where $w \in V_\ell$) with the decomposition $ u = \sum_{\upsilon}u_{\upsilon}$, there is a constant $C$ such that
  \begin{gather}
  \label{eq:stable decomp}
    \sum_{\upsilon \in \mathcal{N}_\ell}
    A_L(u_{\upsilon},u_{\upsilon}) 
    \leq
    C A_L(u,u)
  \end{gather}

  Following Lemmas~\ref{lemma:estimate-1}, \ref{lemma:equivalence-2},
  \ref{lemma:estimate-2} and Proposition \ref{proposition:estimate-3},
  we get:
  \begin{align}
    \sum_{\upsilon \in \mathcal{N}_\ell}
    A_L(u_{\upsilon},u_{\upsilon})
    &= \sum_{\upsilon \in \mathcal{N}_\ell}
    \left\{ a_{\ell}(u_{\upsilon}, u_{\upsilon})+\epsilon^{-1}(\nabla \cdot u_{\upsilon}, \nabla \cdot  v_{\upsilon})\right\}  \\
    &=
    \sum_{\substack{
        \upsilon \in \mathcal{N}_\ell
      }}
    \left\{ a_{\ell}(u_{\upsilon}, u_{\upsilon})+\epsilon^{-1}(\nabla \cdot u^{\perp}_{\upsilon}, \nabla \cdot  u^{\perp}_{\upsilon})\right\}  \\
    &\leq
    C_2a_{\ell}(u,u) +\sum_{\substack{
        \upsilon \in \mathcal{N}_\ell
      }}
    C_1\frac1\alpha \epsilon^{-1}a_{\ell}(u^{\perp}_{\upsilon},u^{\perp}_{\upsilon})  \\ 
    &\leq
    C_2a_{\ell}(u,u)+  \epsilon^{-1}C_1 \frac1\alpha a_{\ell}( u^{\perp}, u^{\perp}) \\
    &\leq
    C_2a_{\ell}(u,u)+  \epsilon^{-1}C_1\frac1\alpha\frac1\gamma_\ell (\div u^{\perp}, \div u^{\perp}) \\
    &=
    C_2a_{\ell}(u,u)+  \epsilon^{-1}C_1\frac1\alpha\frac1\gamma_\ell (\div u, \div u) \\
    &\leq
    \max\left\{C_2,C_1\frac1\alpha\frac1\gamma_\ell\right\}A_L(u,u) \\
    &=
    C_{\ell}A_L(u,u)
  \end{align} 
  where $C_{\ell} = \max \left\{C_2,C_1\frac1\alpha\frac1\gamma_\ell\right\}$.
  \end{proof}
Now set 
\begin{gather}
\label{def:beta}
\beta_\ell = \frac{1}{\eta}C_{\ell}
\end{gather}
We have verified the two conditions in Proposition ~\ref{proposition:singularly-perturbed}.

\begin{lemma}
\label{lemma:5}
Given $\beta_\ell$ above and $m(\ell)$ defined in ~\ref{sec:V-cycle}, there is a constant $\hat{C}$ such that 
\begin{gather}
\frac{\beta_\ell}{2m(\ell)} \leq \hat{C}
\end{gather}
\end{lemma}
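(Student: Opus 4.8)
The plan is to trace the dependence of $\beta_\ell$ on $\ell$ explicitly and then observe that the geometric increase of smoothing steps in the variable V-cycle outpaces it. First I would unwind the definitions. From the inf-sup Proposition we have $\gamma_\ell = c\sqrt{2^{\ell-L}}$, hence $1/\gamma_\ell = c^{-1}2^{(L-\ell)/2}$. Substituting this into the constant $C_\ell = \max\{C_2, C_1\alpha^{-1}\gamma_\ell^{-1}\}$ that emerged at the end of the proof of Theorem~\ref{theorem:singularly-perturbed}, and recalling $\beta_\ell = \eta^{-1}C_\ell$ from~\eqref{def:beta}, gives
\[
  \beta_\ell \;=\; \frac1\eta\,\max\Bigl\{C_2,\ \frac{C_1}{\alpha c}\,2^{(L-\ell)/2}\Bigr\},
\]
so that $\beta_\ell$ grows at most like $2^{(L-\ell)/2}$ as $\ell$ decreases.

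Next I would insert the variable V-cycle choice $m(\ell) = m(L)\,2^{L-\ell}$ and estimate
\[
  \frac{\beta_\ell}{2m(\ell)}
  \;=\; \frac{1}{2\eta\,m(L)\,2^{L-\ell}}\,
  \max\Bigl\{C_2,\ \frac{C_1}{\alpha c}\,2^{(L-\ell)/2}\Bigr\}.
\]
I would then split according to which term realizes the maximum. If it is $C_2$, then, since $2^{L-\ell}\ge 1$, the quotient is bounded by $C_2/(2\eta\, m(L))$. If it is the second term, the surviving power is $2^{(L-\ell)/2}/2^{L-\ell} = 2^{-(L-\ell)/2}\le 1$, so the quotient is bounded by $C_1/(2\eta\alpha c\, m(L))$. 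Hence
\[
  \hat C \;=\; \frac{1}{2\eta\,m(L)}\,\max\Bigl\{C_2,\ \frac{C_1}{\alpha c}\Bigr\}
\]
works, and it is independent of both $\ell$ and $L$ (it depends only on the fixed data $\eta$, $m(L)$, $\alpha$, $c$, $C_1$, $C_2$).

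No step here is really an obstacle; the one point the argument hinges on is that $\beta_\ell$ inherits only the \emph{square root} $2^{(L-\ell)/2}$ of the inf-sup degradation, whereas the variable V-cycle supplies a full factor $2^{L-\ell}$ of additional smoothing — so the ratio is in fact geometrically small in $L-\ell$, not merely bounded. For the standard V-cycle, where $m(\ell)\equiv m(L)$, the quotient behaves like $2^{(L-\ell)/2}$ and is unbounded as $L\to\infty$; this is why Theorem~\ref{theorem:singularly-perturbed} (and hence Theorem~\ref{theorem:main}) is stated for the variable V-cycle, and I would point this out so the reader does not expect a stronger claim.
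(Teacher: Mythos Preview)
Your proof is correct and follows essentially the same approach as the paper: both split according to which term in $C_\ell=\max\{C_2,C_1\alpha^{-1}\gamma_\ell^{-1}\}$ is active, bound the first case by $2^{L-\ell}\ge1$ and the second by $2^{(L-\ell)/2}/2^{L-\ell}\le1$. Your version is slightly cleaner in packaging the two bounds into a single $\hat C = (2\eta\,m(L))^{-1}\max\{C_2, C_1/(\alpha c)\}$, and the closing remark contrasting the variable and standard V-cycles is a nice addition not present in the paper.
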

\begin{proof}
We will discuss this inequality in two cases: first, if $\beta_\ell = \frac{1}{\eta}C_2$, then 
\begin{gather}
\frac{\beta_\ell}{2m(\ell)} = \frac{\frac{1}{\eta}C_2}{2m_0 2^{L-\ell}} \leq  \frac{\frac{1}{\eta}C_2}{2m_0} =: \hat{C}
\end{gather}
On the other hand, if $\beta = \frac{1}{\eta}C_1\frac1\alpha\frac1\gamma$
\begin{gather}
\frac{\beta_\ell}{2m(\ell)} = \frac{C_1\frac{1}{\alpha}\frac{1}{c}\sqrt{2^{L-\ell}}}{2m_0 2^{L-\ell}} = \frac{C_1\frac{1}{\alpha}\frac{1}{c}}{2m_0 \sqrt{2^{L-\ell}}}  \leq \frac{C_1\frac{1}{\alpha}\frac{1}{c}}{2m_0}=: \hat{C}
\end{gather}
\end{proof}

\subsection{The mixed problem}
\label{sec:mixed-problem}

Secondly, we will discuss the Stokes equation in mixed
variables. Set $X_{\ell,\epsilon} := \{(u_\ell,p_\ell) \in X_\ell : \div u_\ell = \epsilon p_\ell \}$. Now, it remains to show the equivalence between the multigrid algorithms.

\begin{proposition}
  The multigrid components fulfill the following properties:
  \begin{enumerate}
  \item The smoother $\mathcal{R}_\ell$ for the mixed problem defined
    in~\eqref{eq:smoother-Stokes} preserves $X_{\ell,\epsilon}$. On
    the subspace it is equivalent to the smoother $R_\ell$ in primal
    variables. This means for $(u_\ell, p_\ell) \in X_{\ell,\epsilon}$ and
    \begin{gather}
      \begin{pmatrix}
        \hat{u}_\ell\\\hat{p}_\ell
      \end{pmatrix}
      = \mathcal{R}_\ell
      \begin{pmatrix}
        u_\ell\\p_\ell
      \end{pmatrix}
    \end{gather}
    there holds $(\hat{u_\ell}, \hat{p_\ell}) \in X_{\ell,\epsilon}$ and 
    \begin{gather}
      \hat{u}_\ell = R_\ell u_\ell
    \end{gather}

  \item The prolongation $\I_{\ell-1}$ maps $X_{\ell-1,\epsilon}$ into
    $X_{\ell,\epsilon}$. On the subspace it is equivalent to the prolongation
    $I_\ell$ in primal variables. This means for $(u_{\ell-1},
    p_{\ell-1}) \in X_{\ell-1,\epsilon}$ and
    \begin{gather}
      \begin{pmatrix}
        \hat{u}_\ell\\\hat{p}_\ell
      \end{pmatrix}
      = \I_\ell
      \begin{pmatrix}
        u_{\ell-1}\\p_{\ell-1}
      \end{pmatrix}
    \end{gather}
    there holds $(\hat{u}_\ell, \hat{p}_\ell) \in X_{\ell,\epsilon}$ and 
    \begin{gather}
      \hat{u}_\ell = I_{\ell}(u_{\ell-1})
    \end{gather}

  \item The coarse grid solution operator maps $X_{\ell-1,\epsilon}$ into
    $X_{\ell,\epsilon}$. On the subspace it is equivalent to the coarse grid
    solution operator in primal variables. This means for $(u_\ell,
    p_\ell) \in X_{\ell,\epsilon}$ and
    \begin{gather}
      \begin{pmatrix}
        \hat{u}_{\ell-1}\\\hat{p}_{\ell-1}  
      \end{pmatrix}
      = \A^{-1}_{\ell-1}[\I_{\ell-1}]^t\A_\ell
      \begin{pmatrix}
        u_{\ell}\\p_{\ell}
      \end{pmatrix}
    \end{gather}
    there holds $(\hat{u}_{\ell-1}, \hat{p}_{\ell-1}) \in X_{\ell-1,\epsilon}$ and 
    \begin{gather}
      \hat{u}_{\ell-1} = A_{\ell-1}^{-1}[I_{\ell-1}]^t A_\ell u_\ell
    \end{gather}
  \end{enumerate}
\end{proposition}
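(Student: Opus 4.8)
The plan is to reduce all three items to the global primal/mixed equivalence of Lemma~\ref{lemma:equivalence-1} together with one elementary algebraic observation: on the subspace $X_{\ell,\epsilon}$ the mixed form $\A_\ell$ collapses to the singularly perturbed elliptic form. Indeed, if $(u,p)\in X_{\ell,\epsilon}$, i.e.\ $\div u=\epsilon p$ pointwise, then for every $(v_\ell,q_\ell)\in X_\ell$ the coupling and penalty contributions recombine, $(p,\div v_\ell)=\epsilon^{-1}(\div u,\div v_\ell)$ and $-(q_\ell,\div u)+\epsilon(p,q_\ell)=0$, so that
\[
  \A_\ell\left(\begin{pmatrix}u\\p\end{pmatrix},\begin{pmatrix}v_\ell\\q_\ell\end{pmatrix}\right)
  = a_\ell(u,v_\ell)+\epsilon^{-1}(\div u,\div v_\ell)
  = A_{\ell,\epsilon}(u,v_\ell),
\]
independently of the test function $(v_\ell,q_\ell)$. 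Two consequences will be used repeatedly. First, for an iterate $(u_\ell,p_\ell)\in X_{\ell,\epsilon}$ the residual $\mathcal F-\A_\ell(u_\ell,p_\ell)$ of the right hand side $\mathcal F$ of~\eqref{eq:nearly-incompressible}, identified with an element of $X_\ell$ through the $L^2$ product, has vanishing pressure component and velocity component $f-A_\ell u_\ell$. Second, by Lemma~\ref{lemma:equivalence-1} applied with datum $(\rho,0)$, one has $\A_\ell^{-1}(\rho,0)=(A_\ell^{-1}\rho,\,\epsilon^{-1}\div A_\ell^{-1}\rho)\in X_{\ell,\epsilon}$, with velocity part exactly $A_\ell^{-1}\rho$. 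Hence every pair and every datum produced inside one V-cycle step started in $X_{\ell,\epsilon}$ stays attached to $X_{\ell,\epsilon}$, and the sought correspondence is the one between these attached data and the primal ones.

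Items (ii) and (iii) follow quickly from this. For (ii), the prolongations $I_{\ell,u}$, $I_{\ell,p}$ are the natural injections induced by $V_{\ell-1}\subset V_\ell$ and $Q_{\ell-1}\subset Q_\ell$; they act as the identity on functions, so a pointwise identity $\div u_{\ell-1}=\epsilon p_{\ell-1}$ is preserved and $\hat u_\ell=I_\ell(u_{\ell-1})$ holds by definition. For (iii), since $(u_\ell,p_\ell)\in X_{\ell,\epsilon}$ the collapse identity shows that $\A_\ell(u_\ell,p_\ell)$, as an $L^2$-functional, equals $(A_\ell u_\ell,0)$; the transfer $[\I_{\ell-1}]^t$, being the $L^2$-projection~\eqref{eq:3} componentwise, sends it to $([I_{\ell-1}]^t A_\ell u_\ell,0)$; applying $\A_{\ell-1}^{-1}$ to a datum with vanishing pressure component lands, by Lemma~\ref{lemma:equivalence-1} on level $\ell-1$, in $X_{\ell-1,\epsilon}$ with velocity component $A_{\ell-1}^{-1}[I_{\ell-1}]^t A_\ell u_\ell$, as claimed.

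For item (i) I would reduce, via the first consequence above, to applying $\mathcal R_\ell$ to a datum of the form $(\rho,0)$ — the shape of the residuals the smoother actually sees in the V-cycle. By the second consequence $\A_\ell^{-1}(\rho,0)=:(w,r)\in X_{\ell,\epsilon}$ with $w=A_\ell^{-1}\rho$. It then remains to show that each patch projection $\mathcal P_{\ell,\upsilon}$ maps $X_{\ell,\epsilon}$ into $X_{\ell,\upsilon}\cap X_{\ell,\epsilon}$ and acts as $P_{\ell,\upsilon}$ on velocities. Given $(w,r)\in X_{\ell,\epsilon}$, set $\tilde w:=P_{\ell,\upsilon}w\in V_{\ell,\upsilon}$ and $\tilde r:=\epsilon^{-1}\div\tilde w$; since $\tilde w\in V_{\ell,\upsilon}$ has zero normal trace on $\partial\Omega_{\ell,\upsilon}$, $\div\tilde w$ is a piecewise polynomial of the pressure degree with zero mean on $\Omega_{\ell,\upsilon}$, so $\tilde r\in Q_{\ell,\upsilon}$ and $(\tilde w,\tilde r)\in X_{\ell,\upsilon}\cap X_{\ell,\epsilon}$. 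Applying the collapse identity to both $(\tilde w,\tilde r)$ and $(w,r)$, the defining equation~\eqref{eq:Ritz-Stokes} for $\mathcal P_{\ell,\upsilon}(w,r)$ reduces to $A_\ell(\tilde w,v_\upsilon)=A_\ell(w,v_\upsilon)$ for all $v_\upsilon\in V_{\ell,\upsilon}$, which is precisely the definition~\eqref{eq:Ritz-elliptic} of $P_{\ell,\upsilon}w$; hence by uniqueness of the projection in~\eqref{eq:Ritz-Stokes} we get $\mathcal P_{\ell,\upsilon}(w,r)=(\tilde w,\tilde r)$. Summing over $\upsilon$ and using that $X_{\ell,\epsilon}$ is a linear subspace yields $\hat u_\ell=\eta\sum_\upsilon P_{\ell,\upsilon}A_\ell^{-1}\rho=R_\ell\rho$ and $(\hat u_\ell,\hat p_\ell)\in X_{\ell,\epsilon}$. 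The step requiring the most care is exactly this one: pinning down the invariant set of data actually processed by $\mathcal R_\ell$ inside the V-cycle (a zero-pressure residual rather than a literal $X_{\ell,\epsilon}$ iterate) so that Lemma~\ref{lemma:equivalence-1} applies verbatim, and then carrying out the patch-projection verification through the collapse of $\A_\ell$ to $A_{\ell,\epsilon}$ on $X_{\ell,\epsilon}$; everything else is routine bookkeeping with the $L^2$-identification of functionals and functions.
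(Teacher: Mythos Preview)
Your proof is correct and actually supplies the argument that the paper omits: the paper's own proof consists entirely of a citation to Sch\"oberl's dissertation, with the remark that the reasoning is purely linear algebra and therefore transfers unchanged to the bilinear forms used here. Your collapse identity $\A_\ell\bigl((u,p),(v,q)\bigr)=A_{\ell,\epsilon}(u,v)$ for $(u,p)\in X_{\ell,\epsilon}$, together with the patch-level verification that $\mathcal P_{\ell,\upsilon}$ preserves $X_{\ell,\epsilon}$ and acts as $P_{\ell,\upsilon}$ on velocities, is precisely the linear-algebraic content being invoked.

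One point you handle more carefully than the paper's statement: item~(i), read literally as a claim about $\mathcal R_\ell$ acting on an arbitrary $(u_\ell,p_\ell)\in X_{\ell,\epsilon}$, is delicate because $\A_\ell^{-1}(u_\ell,p_\ell)$ need not lie in $X_{\ell,\epsilon}$ when $p_\ell\neq 0$. You correctly reinterpret it through the actual V-cycle data, namely residuals of the form $(\rho,0)$ or, equivalently, through the smoothing error operator $I-\mathcal R_\ell\A_\ell = I-\eta\sum_\upsilon \mathcal P_{\ell,\upsilon}$. Since each $\mathcal P_{\ell,\upsilon}$ preserves $X_{\ell,\epsilon}$ with velocity action $P_{\ell,\upsilon}$, so does $I-\mathcal R_\ell\A_\ell$ with velocity action $I-R_\ell A_\ell$, which is exactly what the induction in Theorem~\ref{theorem:equivalence} needs. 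This is the right reading, and your observation that $\div V_{\ell,\upsilon}\subset Q_{\ell,\upsilon}$ (zero normal trace forces zero-mean divergence on the patch) is the one place where the specific finite element structure enters.
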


\begin{proof}
  The proof of this proposition can be found for the operators there
  in~\cite[p. 93]{Schoeberl99dissertation}. We do not provide it here
  since the arguments are purely linear algebra, and thus apply
  independent of the actual bilinear form.
\end{proof}

\begin{theorem}
  \label{theorem:equivalence}
  The multigrid algorithm in mixed variables preserves the space
  $X_{\ell,\epsilon}$. On this subspace it is equivalent to the
  multigrid algorithm in primal variables. This means for $(u_\ell,
  p_\ell) \in X_{\ell,\epsilon}$ and $(\hat{u}_\ell, \hat{p}_\ell) =
  \B_\ell(u_\ell,p_\ell)$ there holds $(\hat{u}_\ell, \hat{p}_\ell)
  \in X_{\ell,\epsilon}$ and
  \begin{gather}
    \hat{u_\ell} = B_\ell u_\ell
  \end{gather}
  where $ \B_\ell$ and $ B_\ell$ are the corresponding multigrid operators for each algorithm. 
\end{theorem}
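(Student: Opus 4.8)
The plan is to prove Theorem~\ref{theorem:equivalence} by induction on the mesh level $\ell$, using the three-part Proposition above as the set of building blocks for the inductive step. The base case is $\ell = 0$, where $\B_0 = \A_0^{-1}$ and $B_0 = A_0^{-1}$; here the claim that $\A_0^{-1}$ maps $X_{0,\epsilon}$ into itself and agrees with $A_0^{-1}$ on the velocity component is exactly Lemma~\ref{lemma:equivalence-1} (the equivalence between the mixed formulation~\eqref{eq:nearly-incompressible} and the elliptic problem~\eqref{eq:elliptic-2}): if $(u_0,p_0)\in X_{0,\epsilon}$, i.e.\ $\div u_0 = \epsilon p_0$, then the exact solve produces $(\hat u_0,\hat p_0)$ with $\epsilon\hat p_0 = \div\hat u_0$ and $\hat u_0 = A_0^{-1}u_0$.

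For the inductive step, I would assume the statement holds at level $\ell-1$, i.e.\ $\B_{\ell-1}$ preserves $X_{\ell-1,\epsilon}$ and restricts to $B_{\ell-1}$ there, and then simply trace the input $(u_\ell,p_\ell)\in X_{\ell,\epsilon}$ through the four stages of the V-cycle defined in Section~\ref{sec:V-cycle}. The key observation is that the invariance of $X_{\ell,\epsilon}$ is preserved under each operation that appears: applying $\mathcal A_\ell$ to an element of $X_{\ell,\epsilon}$ and then the smoother keeps us in $X_{\ell,\epsilon}$ by part~(1) of the Proposition; restricting the residual to level $\ell-1$ via $\I_{\ell-1}^t$ and solving on the coarse grid keeps us in $X_{\ell-1,\epsilon}$ by part~(3); prolonging back via $\I_{\ell-1}$ returns us to $X_{\ell,\epsilon}$ by part~(2); and in each case the velocity component of the mixed operation coincides with the corresponding primal operation. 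Since the V-cycle is built entirely by composing and adding these operations, and since $X_{\ell,\epsilon}$ is a \emph{linear} subspace, each intermediate iterate $(u_i,p_i)$ stays in $X_{\ell,\epsilon}$ and its velocity component equals the iterate $u_i$ produced by the primal V-cycle with the same data $u_\ell$. Applying this through $i = 2m(\ell)+1$ and reading off the final assignment gives $(\hat u_\ell,\hat p_\ell) = \B_\ell(u_\ell,p_\ell)\in X_{\ell,\epsilon}$ with $\hat u_\ell = B_\ell u_\ell$.

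The one point requiring a little care—and the closest thing to an obstacle—is the bookkeeping of the residuals: one must check that whenever we compute $\mathcal L_\ell - \mathcal A_\ell(u_i,p_i)$ inside the cycle, the right-hand side $\mathcal L_\ell$ itself (which encodes the data $(f_\ell,g_\ell)$) is compatible with $X_{\ell,\epsilon}$, or more precisely that the operations only ever see the component of the residual that lives in the relevant subspace. For the application of $\B_\ell$ to data arising from the nearly-incompressible problem this is automatic, since by Lemma~\ref{lemma:equivalence-1} the exact solution lies in $X_{\ell,\epsilon}$ and the smoother and coarse solver only reference $\mathcal A_\ell$-residuals; the three parts of the Proposition are stated precisely so that the $\mathcal A_\ell$-residual of an $X_{\ell,\epsilon}$-element is again handled consistently. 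Once this compatibility is noted, the proof is a mechanical unwinding of the recursion, and I would present it as such: state the inductive hypothesis, list the four V-cycle steps, cite part~(1), part~(3), part~(2) of the Proposition at the appropriate lines, and invoke linearity of $X_{\ell,\epsilon}$ to conclude.
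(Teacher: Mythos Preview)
Your proposal is correct and takes essentially the same approach as the paper: both argue by induction on $\ell$, with the inductive step reduced to applying the three parts of the preceding Proposition to the recursive definition of the V-cycle. Your write-up is considerably more detailed than the paper's (which simply writes out the recursion $B_\ell = (R_\ell)^{m(\ell)}\bigl(I - I_{\ell-1}(I - B_{\ell-1})A_{\ell-1}^{-1}[I_{\ell-1}]^t A_\ell\bigr)(R_\ell)^{m(\ell)}$, notes that $\B_\ell$ satisfies the analogous recursion, and says ``apply the above proposition''), and your explicit discussion of the residual bookkeeping is a point the paper leaves entirely implicit, but the underlying argument is the same.
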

\begin{proof}
  The multigrid operator $B_\ell$ fulfills the recursion 
  \begin{gather}
    B_0 = A_0^{-1},
  \end{gather}
  
  \begin{gather}
    B_\ell = (R_\ell)^{m_\ell}(I - I_\ell(I - (B_{\ell-1}))A_{\ell-1}^{-1}[I_{\ell-1}]^t A_\ell)(R_\ell)^{m_\ell},
  \end{gather}
  and the mixed operator $\B_\ell$ fulfills a corresponding
  one. Then we apply the above proposition, and the theorem is proved
  by induction.
\end{proof}

\section{Numerical results}
\label{sec:experiments}

We test the additive Schwarz method which we have analyzed in the
preceding section in order to ascertain that the contraction numbers
are not only bounded away from one, but are actually small enough to
make this method interesting. Furthermore, we conduct experiments,
which go beyond our analysis, in particular regarding the choice of
the penalty parameter and the number of smoothing steps on lower
levels.

The experimental setup for most of the tables is as follows: the
domain is $\Omega = [-1,1]^2$, the coarsest mesh $\T_0$ consists of a
single cell $T=\Omega$. The mesh $\T_\ell$ on level $\ell$ is obtained
by dividing all cells in $\T_{\ell-1}$ into four quadrilaterals by
connecting the edge midpoints. Thus, a mesh on level $\ell$ has
$4^\ell$ cells, and the length of their edges is $2^{1-\ell}$.  The
right hand side is $f=(1,1)$. For the relaxation parameter in the
additive Schwarz method, we found that $0.5$ is the value which
provides the best results for all experimental setups, hence we keep
it there in all the following experimental setups.



In Table ~\ref{table:1}, we first test the additive Schwarz smoother
using variable V-cycle algorithm on a square domain with no-slip
boundary condition. For the penalty constant in the DG form
~\eqref{eq:IP}, we choose the penalty parameter as
$\bar{\sigma}/h_{L}$, where $\bar{\sigma}=(k+1)(k+2)$, on the
finest level $L$ and all lower levels $\ell$. Results for different
pairs of $RT_k/Q_k$ are reported in the table which show the fast and uniform
convergence.

\begin{table}[tp]
  \center
  \begin{tabular}{ | c | c c |}
    \hline
    level&$RT_1$&$RT_2$\\ \hline
    3&4&4
    \\ \hline
    4&4&4
    \\ \hline
    5&4&4
    \\ \hline
    6&4&4
    \\   \hline
    7&4&4
    \\ \hline
    8&4&5
      \\ \hline  
  \end{tabular}
  \caption{Number of iterations $n_{6}$ to reduce the residual by $10^{-6}$ with the
    variable V-cycle algorithm with penalty parameter dependent of the finest level mesh size.}
     \label{table:1}
 \end{table}

 In Table ~\ref{table:2}, we keep the same experimental setup and
 present iteration counts for the standard V-cycle algorithm with one
 and two pre- and post-smoothing steps, respectively. Although our
 analysis does not apply, we still observe uniform convergence
 results. We also see that the variable V-cycle with a single
 smoothing step on the finest level is as fast as the standard V-cycle
 with two smoothing steps, and thus the variable V-cycle is more
 efficient.

 \begin{table}[tp]
  \center
  \begin{tabular}{ | c | c c | c c |}
    \hline
    & \multicolumn{2}{c|}{$m(\ell) = 1$}
    & \multicolumn{2}{c|}{$m(\ell) = 2$}
    \\\hline
    level&$RT_1$&$RT_2$&$RT_1$&$RT_2$ \\ \hline
    3&7&7&4&4
    \\ \hline
    4&7&7&4&4
    \\ \hline
    5&7&7&4&4
    \\ \hline
    6&7&7&4&4
    \\   \hline
    7&8&8&4&4
    \\ \hline
    8&8&8&4&4
      \\ \hline  
  \end{tabular}
  \caption{Number of iterations $n_{6}$ to reduce the residual by
    $10^{-6}$ with the standard V-cycle iteration with one and two
    pre- and post-smoothing steps. Penalty
    parameter dependent of the finest level mesh size.}
     \label{table:2}
 \end{table}

 In Table ~\ref{table:4}, we test the variable and standard V-cycles
 with penalty parameters depending on the mesh level $\ell$, namely
 $\bar{\sigma}/h_{\ell}$ (where $\bar{\sigma}$ is a positive constant
 depending on the polynomial degree) in the DG form
 ~\eqref{eq:IP}. While our convergence analysis does not cover this
 case either, we observe convergence rates equal to the case with
 inherited forms.

\begin{table}[tp]
  \center
  \begin{tabular}{ | c | c c | c c | }
    \hline
    & \multicolumn{2}{c|}{variable}
    & \multicolumn{2}{c|}{standard}
    \\\hline
    level&$RT_1$&$RT_2$&$RT_1$&$RT_2$\\ \hline
    3&4&4&7&7
    \\ \hline
    4&4&4&7&7
    \\ \hline
    5&4&4&7&7
    \\ \hline
    6&4&4&7&7
    \\   \hline
    7&4&4&7&8
    \\ \hline
    8&4&5&8&8
      \\ \hline  
  \end{tabular}
  \caption{ Penalty parameter dependent on the mesh size of each level. Number of iterations $n_{6}$ to reduce
    the residual by $10^{-6}$ with variable and standard V-cycle
    iterations with $m(L) = 1$.}
    \label{table:4}
 \end{table}

 In Table ~\ref{table:6}, we provide results with GMRES solver and
 $\B_L$ as preconditioner for different experimental setups as in
 Tables~\ref{table:1},~\ref{table:2} and ~\ref{table:4}
 respectively. The second and third columns are results for variable
 V-cycle with penalty parameter dependent of the finest level mesh
 size. The fourth and fifth columns are the results
 for standard V-cycle with penalty parameter dependent of the finest
 level mesh size. The last two columns are the results for standard
 V-cycle with penalty parameter depend on the mesh size of each
 level. From this table, we see that the GMRES method, as expected, is
 faster in every case.

 \begin{table}[tp]
  \center
  \begin{tabular}{ | c | c c | c c | c c |}
    \hline
    & \multicolumn{2}{c|}{variable}
    & \multicolumn{2}{c|}{standard}
    & \multicolumn{2}{c|}{noninherited}
    \\\hline
    level&$RT_1$&$RT_2$&$RT_1$&$RT_2$&$RT_1$&$RT_2$\\ \hline
    3&2&2&2&2&2&2
    \\ \hline
    4&3&3&3&3&3&3
    \\ \hline
    5&3&3&4&3&4&4
    \\ \hline
    6&3&3&5&4&5&5
    \\   \hline
    7&3&3&5&5&5&5
    \\ \hline
    8&5&4&6&6&8&6
      \\ \hline  
  \end{tabular}
   \caption{Number of iterations $n_{6}$ to reduce
    the residual by $10^{-6}$ with GMRES solver and preconditioner
    $\B_L$; variable and standard V-cycle with inherited forms,
    variable V-cycle with noninherited forms. One pre- and
    post-smoothing step on the finest level.}
    \label{table:6}
 \end{table}

\section{Conclusions}
In this paper, we have investigated smoothers based on the ones
introduced by Arnold, Falk, and Winther for problems in $\Hdiv$ in a
variable V-cycle preconditioner for the Stokes system.  We presented the
convergence analysis and showed uniform contraction independent on the
mesh level. In numerical experiments we showed that the contraction
is not only uniform, but also very fast, thus making our method a
feasible solver or preconditioner.

In theory, the performance of the smoother relies on an exact sequence
property of finite element spaces, in particular an $\Hdiv$-conforming
discontinuous Galerkin discretization of the Stokes problem.
Our experiments with the Taylor--Hood elements, where the method fails,
demonstrate that this is not an artifact of the analysis, but that the
technique does not work due to the lack of an exact
Hodge decomposition and nested divergence free subspaces.

\appendix
\section{Proof for Proposition~\ref{proposition:singularly-perturbed}}

Following the proof in ~\cite{ArnoldFalkWinther97Hdiv}, we want to
show by induction on $i$ that
\begin{gather} 
0\leq A_{L}((I-B_i A_i)u,u)\leq \delta A_L(u,u), \quad \forall u\in V_\ell
\end{gather}
For $i=1$ is obvious since $B_1=A_1^{-1}$. Now check if the above
inequality hold for $i=\ell -1$. Recall the relaxation operator
$K_\ell = I - R_\ell A_\ell$ and the recurrence relation :
\begin{gather}
  I-B_\ell A_\ell =
  K^{m(\ell)}_{\ell} \bigl[
  (I - P_{\ell -1})+(I - B_{\ell -1}A_{\ell -1})P_{\ell -1}
  \bigr] K^{m(\ell)}_{\ell}
\end{gather}
The lower bound easily follows from the inductive hypothesis and the
above identity. For the upper bound, we use the induction hypothesis
to obtain

\begin{align}
  \label{inequality:1}
  A_{L}((I-B_\ell A_\ell)u,u)
  &\leq A_L([I - P_{\ell -1}]K^{m(\ell)}_{\ell}u,K^{m(\ell)}_{\ell}u)
  + \delta A_L({P_{j-1}K^{m(\ell)}_{\ell}u,K^{m(\ell)}_{\ell}u})
  \\
  &= (1-\delta)A_L([I - P_{\ell
    -1}]K^{m(\ell)}_{\ell}u,K^{m(\ell)}_{\ell}u)
  +\delta A_L({K^{m(\ell)}_{\ell}u,K^{m(\ell)}_{\ell}u}).
\end{align}

Now by the orthogonality from~\eqref{eq: ritz projection}

\begin{align}
 &A_L([I - P_{\ell -1}]K^{m(\ell)}_{\ell}u,[I - P_{\ell -1}]K^{m(\ell)}_{\ell}u) \\
 &= A_L([I - P_{\ell -1}]K^{m(\ell)}_{\ell}u,K^{m(\ell)}_{\ell}u) \\
 &= ([I - P_{\ell -1}]K^{m(\ell)}_{\ell}u,A_{\ell}K^{m(\ell)}_{\ell}u) \\
 &=  (R_{\ell}^{-1}[I - P_{\ell -1}]K^{m(\ell)}_{\ell}u,R_{\ell}A_{\ell}K^{m(\ell)}_{\ell}u) \\
 &\leq (R_{\ell}^{-1}[I - P_{\ell -1}]K^{m(\ell)}_{\ell}u,[I - P_{\ell -1}]K^{m(\ell)}_{\ell}u)^{\frac{1}{2}}(R_{\ell}A_{\ell}K^{m(\ell)}_{\ell}u,A_{\ell}K^{m(\ell)}_{\ell}u)^{\frac{1}{2}} \\
&\leq \sqrt{\beta_\ell} ([I - P_{\ell -1}]K^{m(\ell)}_{\ell}u,[I - P_{\ell -1}]K^{m(\ell)}_{\ell}u)^{\frac{1}{2}}(R_{\ell}A_{\ell}K^{m(\ell)}_{\ell}u,A_{\ell}K^{m(\ell)}_{\ell}u)^{\frac{1}{2}} 
\end{align}
Hence, we get 
\begin{align}
&A_L([I - P_{\ell -1}]K^{m(\ell)}_{\ell}u,K^{m(\ell)}_{\ell}u) \leq \beta_\ell (R_{\ell}A_{\ell}K^{m(\ell)}_{\ell}u,A_{\ell}K^{m(\ell)}_{\ell}u) \\
&= \beta_\ell A_L([I - K_{\ell -1}]K^{2m(\ell)}_{\ell}u,u)
\end{align}

It follows from the positive semi-definiteness and ~\eqref{eq:2} that
the spectrum of $K_\ell$ is contained in the interval
$[0,1]$. Therefore, we have
\begin{gather}
\label{inequality:2}
A_L([I - K_{\ell -1}]K^{2m(\ell)}_{\ell}u,u) \leq A_L([I - K_{\ell -1}]K^{i}_{\ell}u,u),  \quad for \quad i \leq 2m(\ell)
\end{gather}
whence
\begin{align}
&A_L([I - K_{\ell -1}]K^{2m(\ell)}_{\ell}u,u) \leq \frac{1}{2m(\ell)} \sum_{i=0}^{2m(\ell)-1}A_L([I-K_{\ell}]K_{\ell}^{i}u,u) \\
&= \frac{1}{2m(\ell)} A_L([I-K_{\ell}]K_{\ell}^{2m(\ell)}u,u) 
\end{align}

Combining ~\eqref{inequality:1} and ~\eqref{inequality:2} and
following Lemma ~\ref{lemma:5}, we get
\begin{align}
&A_{L}((I-B_\ell A_\ell)u,u)  \leq (1-\delta)\frac{\beta_\ell}{2m(\ell)}A_L([I - K^{2m(\ell)}]u,u) +\delta A_L({K^{m(\ell)}_{\ell}u,K^{m(\ell)}_{\ell}u})  \\
&\leq (1-\delta)\hat{C}A_L([I - K^{2m(\ell)}]u,u) +\delta A_L({K^{m(\ell)}_{\ell}u,K^{m(\ell)}_{\ell}u})  \\
&= (1-\delta)\hat{C}A_L(u,u) +[\delta-(1-\delta)\hat{C}] A_L({K^{m(\ell)}_{\ell}u,K^{m(\ell)}_{\ell}u}) 
\end{align}
The results now follows by choosing :
\begin{gather}
  \delta = (1-\delta)\hat{C},
  \quad\text{i. e.,}\quad
  \delta = \frac{\hat{C}}{1+\hat{C}}
\end{gather}

\section*{Acknowledgments}
Numerical results in this publication were produced using the deal.II
library~\cite{dealII81} described
in~\cite{BangerthHartmannKanschat07}. We would like to
thank Joachim Schöberl for interesting and helpful discussion about
his work
in~\cite{Schoeberl98,Schoeberl99dissertation,SchoeberlZulehner03}.

\bibliographystyle{abbrv}
\bibliography{all,kanschat,local}

\end{document}